\newtheorem{thm}{Theorem}[section] 
\newtheorem{lem}[thm]{Lemma} 
\newtheorem{prop}[thm]{Proposition} 
\newtheorem{defn}{Definition}[section]
\theoremstyle{definition} 
\theoremstyle{remark}
\theoremstyle{definition}
\def\O{\Omega}
\def\S{\Sigma} 
\def\n{\nabla}
\def\p{\partial}
\def\a{\alpha}
\def\n{\nabla}
\def\O{\Omega}
\def\p{\partial}
\def\a{\alpha}
\def\d{\delta}
\def\l{\lambda}
\def\s{\sigma}
\def\ov{\overline}
\def\n{\nabla}
\def\<{\langle}
\def\>{\rangle}
\def\n{\nabla}
\def\RR{\mathbb{R}}
\def\SS{\mathbb{S}}
\def\O{\Omega}
\def\p{\partial}
\def\a{\alpha}
\def\d{\delta}
\def\l{\lambda}
\def\K{\mathcal{K}}
\def\s{\sigma}
\def\ov{\overline}
\def\wh{\widehat}
\def\C{\mathcal{C}}
\def\ol{\overline}
\def\wt{\widetilde}
\def\K{\mathcal{K}}
\patchcmd{\abstract}{\scshape\abstractname}{\textbf{\abstractname}}{}{}
\def\@makefnmark{} 
\numberwithin{equation}{section}
\numberwithin{exa}{section}
\begin{document}
	\title[The capillary $L_p$-Minkowski problem]
	{The capillary $L_p$-Minkowski problem}

\author[X. Mei]{Xinqun Mei}\address[X. Mei]{Key Laboratory of Pure and Applied Mathematics, School of Mathematical Sciences, Peking University,  Beijing, 100871, P.R. China}\email{\href{qunmath@pku.edu.cn}{qunmath@pku.edu.cn}}
	
\author[G. Wang]{Guofang Wang}\address[G. Wang]{Mathematisches Institut, Albert-Ludwigs-Universität Freiburg, Freiburg im Breisgau, 79104, Germany}\email{\href{guofang.wang@math.uni-freiburg.de}{guofang.wang@math.uni-freiburg.de}}

\author[L. Weng]{Liangjun Weng}\address[L. Weng]{Centro di Ricerca Matematica Ennio De Giorgi, Scuola Normale Superiore, Pisa, 56126, Italy  \& Dipartimento di Matematica, Universit\`a di Pisa, Pisa, 56127, Italy}\email{\href{mailto:liangjun.weng@sns.it}  {liangjun.weng@sns.it}}
	
\subjclass[2020]{Primary: 52A39, 35J25. Secondary: 58J05, 35B65.}

\keywords{capillary Gauss map, capillary $L_p$-surface area measure, capillary $L_p$-Minkowski problem,  Monge--Amp\`ere equation, Robin boundary condition}

\begin{abstract}
This paper is a continuation of our recent work [Adv. Math. 469 (2025), Paper No. 110230] concerning the capillary Minkowski problem. We propose, in this paper, a capillary $L_p$-Minkowski problem for $p\in \mathbb{R}$, which seeks to find a capillary convex body with a prescribed capillary $L_p$-surface area measure in the Euclidean half-space. This formulation provides a natural Robin boundary analogue of the classical $L_p$-Minkowski problem introduced by Lutwak [J. Differential Geom. 38 (1993), no. 1, 131--150]. For $p>1$, we resolve the capillary $L_p$-Minkowski problem in the smooth category by reducing it to a Monge--Amp\`ere equation with a Robin boundary condition on the unit spherical cap.
\end{abstract}

\maketitle


\section{Introduction}
 
The classical Minkowski problem aims to determine a convex body whose surface area measure corresponds to a given spherical Borel measure. This problem has been completely resolved under a necessary and sufficient condition, through the significant contributions of Minkowski \cite{Min}, Alexandrov \cite{Alex}, Lewy \cite{Lewy}, Nirenberg \cite{Nire}, Pogorelov \cite{Pog52}, Cheng--Yau \cite{CY76}, and many other mathematicians. The study of the Minkowski problem has been a key inspiration in the development of Monge--Amp\`ere equation and the Brunn--Minkowski theory. In 1962, Firey \cite{Firey} introduced the concept of $p$-sums of convex bodies, and Lutwak \cite{Lut93} subsequently demonstrated that a corresponding $L_p$ Brunn--Minkowski theory exists for each of Firey's $p$-sums and $L_p$-surface area measure for $p\geq 1$. Namely, for a convex body $K\subset \RR^{n+1}$ containing the origin as its interior, the $L_p$-surface area measure $d S_{p}$ on $\SS^n$ is defined as
\begin{eqnarray}\label{Lp-surface-area-mea}
    dS_{p}=h^{1-p} dS,
\end{eqnarray}where $h$ is the support function of $K$, and $d S$ is the standard surface area measure of $\p K$. This naturally leads to the formulation of the $L_p$-Minkowski  problem:

\

{\it\noindent {\bf $L_p$-Minkowski problem}.
Let $\mathcal{K}_{0}$ denote the set of convex bodies in $\RR^{n+1}$ containing the origin in their interior.  Given a finite non-trivial Borel measure $\vartheta$ on $\SS^{n}$,  does there exist a convex body $K\in \mathcal{K}_{0}$, such that its $L_p$-surface area measure $d S_{p}$ coincides with the given measure $\vartheta$? }

\

This problem has become one of the main research fields in modern convex body theory and geometric partial differential equations. In particular, $p=1$ corresponds to the classical Minkowski problem concerning the surface area measure, while $p=0$ corresponds to the logarithmic Minkowski problem concerning the cone volume measure. If we further assume that $\vartheta=f(x) d\sigma $, where $d\sigma$ is the standard spherical area measure  and $f(x)$ is a non-negative 
 function on $\SS^{n}$, then in the smooth category, the $L_p$-Minkowski problem can be transformed into solving the following Monge--Amp\`ere  equation
\begin{eqnarray}\label{det}
    \det ( \n^{2}h+h\sigma)=fh^{p-1},\quad \text{on}~\SS^{n},
\end{eqnarray}
where $ \n$ is the covariant derivative with respect to the spherical metric $\sigma$  on $\SS^{n}$. 

The $L_p$-Minkowski problem and its various variants have been considered important problems in convex geometry, and many significant results have been achieved in the past decades. Given the vast scope of developments, it is nearly impossible to provide a comprehensive account here, so we just outline some progress closely related to this problem and recommend the interested reader to the comprehensive book by Schneider \cite{Sch} and a more recent one by Böröczky--Figalli--Ramos \cite{BRF}. When $p>1$, the $L_p$-Minkowski problem was solved by Lutwak \cite{Lut93} for symmetric measures,  Chou--Wang \cite{CW2006-AIM}, and Hug--Lutwak--Yang--Zhang \cite{HLYZ}  for general measures.
In the smooth category, the regularity of the solutions was established by Lutwak--Oliker \cite{LO95} for symmetric measures. Later, Guan--Lin \cite{GL2000} and Chou--Wang \cite{CW2006-AIM} removed the symmetric assumption in \cite{LO95} if $p\geq n+1$. When $p=0$, the $L_p$-Minkowski problem is the logarithmic Minkowski problem, which is equivalent to finding a convex body with a given cone-volume measure. In \cite{BLYZ}, Böröczky--Lutwak--Yang--Zhang obtained necessary and sufficient conditions for the existence of solutions when the prescribed measure is a symmetric Borel measure. See also a recent related work in \cite{Böröczky_Guan_2023} for an alternative anisotropic Gauss curvature flow approach.  For more related results on the logarithmic Minkowski problem,  see, for instance \cite{BHZ, CFL, Chen-Li-Zhu2019TAMS, Sta-1, Sta-2, Zhu} and references therein. When $p=-n-1$, this problem is known as the centro-affine Minkowski problem, which is related to the $SL(n)$ invariant centro-affine curvature whose reciprocal is $h^{n+2}\det(\n^{2}h+h\sigma)$, see for instance \cite{Hug}  and \cite{Ludwig}. For more existence results concerning the centro-affine Minkowski problem, see also  \cite{GLW, JLZ} and references therein. 
In \cite{CW2006-AIM},  Chou--Wang found a Kazdan--Warner type condition which implies that for a general positive function $f$, the centro-affine Minkowski problem may admit no solutions. 
Du \cite{Du} constructed an example with some positive function $f\in C^{\alpha}(\mathbb{S}^{n})$, such that the planar centro-affine Minkowski problem is not solvable.  Until now, the existence and the multiplicity of solutions to the centro-affine Minkowski problem remain a challenging problem. When $p<-n-1$, a recent breakthrough by Guang--Li--Wang \cite{GLW-1}  showed  that for any positive $C^{2}$ function $f$, there exists a $C^{4}$ solution to Eq. \eqref{det}.
There have also been numerous developments concerning the $L_p$-Minkowski problem over the past decades; see, for instance, \cite{ACW, Ben98, BBC, BBCY, Boro-Saroglou2024, Bryan-Ivaki-Scheuer2019, CLZ, HLW, Huang-Lu2013, Hug-LYZ, Jian-Lu-Wang2015-AIM, Kolesnikov-milman2022, Lu, LH,  LW, Lutwak-Yang-Zhang2004, MWW-Quotient, Zhu-2}, and the references therein.

\subsection{Setup and the problem} Following the notations as \cite{MWW-AIM}, we now describe our setting and problem in more detail. 
Let $\RR^{n+1}_+=\{x\in \RR^{n+1}\,|\, x_{n+1}>0\}$ be the upper Euclidean half-space, and $\S\subset \overline{\RR^{n+1}_+}$  a properly embedded, smooth compact hypersurface with boundary such that 
$$\text{int}(\S)\subset \RR^{n+1}_{+}~~~~~\text{and}~~~~~\p \Sigma\subset \p {\RR^{n+1}_{+}}.$$
 We call $\S\subset \ol{\RR^{n+1}_+}$  a \textit{capillary hypersurface} if $\S$ intersects $\p\RR^{n+1}_+$ at a constant contact angle $\theta\in (0, \pi)$. A simple example of a capillary hypersurface is the spherical cap $\C_{\theta}$:  
      \begin{eqnarray*}
		\C_{\theta }  \coloneqq \left\{ \xi\in \ov{\mathbb{R}^{n+1}_+} ~\mid~ |\xi-\cos\theta e|= 1 \right\},
	\end{eqnarray*}
where $e\coloneqq -E_{n+1}=(0, \cdots, 0,-1)$, $E_{n+1}$ is the $(n+1)$-th unit coordinate vector in $\ov{\RR^{n+1}_+}$. We call $\widehat{\S}$ a \textit{capillary convex body} if $\widehat{\S}$ is a bounded closed region in $\ol{\RR^{n+1}_+}$ enclosed by the convex capillary hypersurface $\S$ and $\p\RR^{n+1}_+$, and denote $\wh{\p\S}\coloneqq \p (\wh\S)\setminus \S\subset \p\RR^{n+1}_+$. As in \cite[Section 2]{MWW-AIM}, we use the \textit{capillary Gauss map} 
      \begin{eqnarray*}
          \tilde \nu: &\S &\to \C_\theta  \\ & X &\mapsto \nu(X)+\cos\theta e, 
      \end{eqnarray*} where $\nu$ is the usual Gauss map. It turns out that $\tilde \nu$ is a diffeomorphism map (see \cite[Section 2]{MWWX}), thus we can reparametrize $\S$ using its inverse on $\C_\theta$. Hence, we also view the usual support function $h$ of $\S$ as a function defined on $\C_\theta$.  Denote $\mathcal{K}_\theta$ as the set of all capillary convex bodies in $\ol{\mathbb{R}^{n+1}_{+}}$, and $\mathcal{K}^{\circ}_{\theta}$ as the family of capillary convex bodies with the origin as an interior point of the flat part of their boundary.  
      
      The primary goal of this paper is to study the \textit{capillary $L_p$-surface area measure} $dS^c_{p}$ for a convex capillary body in $\ol{\RR^{n+1}_+}$, which is defined  by \begin{eqnarray}\label{eqn-Lp-surface-area-measure}
          dS_{p}^c\coloneqq  \ell^p  h^{1-p}\det(\n^{2}h+h\sigma)d\sigma,
      \end{eqnarray} 
where $d\sigma$ is the standard area element on $\C_\theta$, $$\ell  \coloneqq \sin^{2}\theta+\cos\theta\<\xi, e\>,$$ and $h$ are the support functions of $\C_\theta$ and $\S$ respectively. See Eq. \eqref{capillary-L-p-area} and Section \ref{sec2} for more details.  
 Thus it is natural to study the regular $L_p$-Minkowski problem for $d S_{p}^c$, which serves as a capillary counterpart of the classical $L_p$-Minkowski problem. 
      
\
	 
\noindent	\textbf{Capillary $L_p$-Minkowski problem.} \textit{  Given a positive smooth function $f$ on $\C_{\theta}$,  does  there exist a capillary convex body $\wh\S\in\mathcal{K}_{\theta}^{\circ}$ such that its capillary $L_p$-surface area measure $dS^c_{p}$ equals to $f\ell^p d\sigma$ ? }

\ 

This problem not only provides a natural boundary analogue of Lutwak's $L_p$-theory but also poses novel analytical challenges, notably the resolution of a Monge--Amp\`ere equation under a Robin-type boundary condition. By applying a similar argument to that in \cite[Proposition~2.4]{MWW-AIM} and \cite[Lemma~2.4, Proposition~2.6]{MWWX},  we reduce the capillary $L_p$-Minkowski problem to a problem of solving the existence of convex solutions to a fully nonlinear elliptic PDE with a Robin boundary condition. More precisely, it corresponds to the solvability of the following Monge--Amp\`ere equation with a Robin boundary value condition:  
\begin{eqnarray}\label{eq-monge-ampere-Lp}
\left\{
		\begin{array}{rcll}\vspace{2mm}\displaystyle
			\det(\n^2 h+h\s )&=&  f h^{p-1},& \quad    \hbox{ in } \C_{\theta},\\
			\n_\mu h &=& \cot\theta  h, &\quad  \hbox{ on } \p \C_\theta,
		\end{array}\right.
	\end{eqnarray}  
where 
$\n h$ and $\n^2h$ are the gradient and the Hessian of $h$ on $\C_\theta$ with respect to the standard spherical metric $\s$ on $\C_{\theta}$ respectively,   and $\mu$ is the unit outward normal of $\p \C_\theta \subset \C_\theta$. In this paper, by a convex solution to Eq. \eqref{eq-monge-ampere-Lp}, we mean that
\begin{eqnarray*} 
   A\coloneqq\n^2 h+h\s >0,~~~ ~{\text{in}}
   ~\C_{\theta}.
\end{eqnarray*}	When $\theta =\frac\pi 2$, this problem can be reduced to the classical closed case by a reflection argument, see Guan--Lin \cite{GL2000}, and Chou--Wang \cite{CW2006-AIM} among many others. Hence we focus on the general case $\theta \neq \frac{\pi}{2}$.  
To present the main result concisely, we introduce a definition that is a capillary adaptation of the even function on $\SS^n$, considered already in \cite{mei-wang-weng-2025convexcapillaryhypersurfacesprescribed}.
\begin{defn}
    Let $f\in C^{2}(\C_{\theta})$ and for $\xi=(\xi_{1},\cdots, \xi_{n},\xi_{n+1})\in\C_{\theta}$, denote $\widehat{\xi}\coloneqq (-\xi_{1},\cdots, -\xi_{n}, \xi_{n+1})$. If $f$ satisfies $f(\xi)=f(\widehat{\xi})$, we call $f$  a capillary even function on $\C_{\theta}$.    A capillary convex body $\wh\S\in \K^\circ_\theta$ is called capillary symmetric if its associated support function is a capillary even function.
\end{defn}

\subsection{Main results} 
We solve the capillary $L_p$-Minkowski problem for $p>1$, as formulated in Eq.~\eqref{eq-monge-ampere-Lp}. The case $p=1$ was previously addressed in \cite{MWW-AIM}. This work thus extends the $L_p$-Minkowski theory to the capillary setting, uncovering new interactions between $L_p$-surface area measure and capillary geometry. The main theorem of this paper extends the result in \cite{MWW-AIM} to all $p>1$ and is stated below.
\begin{thm}\label{thm-1}
Let $p>1$ and $\theta\in (0, \frac{\pi}{2})$. For any positive smooth function $f$ defined on $\C_{\theta}$.
\begin{enumerate}
    \item  If $p>n+1$, then  there exists a unique  smooth solution $h$ solving Eq. \eqref{eq-monge-ampere-Lp}. Moreover, there exists a positive constant $C$, depending only on $n, p, \min\limits_{\C_{\theta}}f$ and $\|f\|_{C^{2}(\C_{\theta})}$, such that
    \begin{eqnarray*}
        \|h\|_{C^{2}(\C_{\theta})}\leq C.
    \end{eqnarray*}
\item If $p=n+1$, then there exists a unique (up to a dilation) positive smooth solution $h$  and a positive constant $\gamma$ solving  
\begin{eqnarray*}\left\{
		\begin{array}{rcll}\vspace{2mm}\displaystyle
			\det(\n^2 h+h\s )&=&  \gamma f h^{n} ,& \quad    \hbox{ in } \C_{\theta},\\
			\n_\mu h&=&\cot\theta h, &\quad  \hbox{ on } \p \C_\theta.
		\end{array}\right.
  \end{eqnarray*}
 \item If $1<p<n+1$,  assume that $f$ is capillary even, then there exists a smooth capillary even solution $h$ solving  Eq. \eqref{eq-monge-ampere-Lp}.
 \end{enumerate}
\end{thm}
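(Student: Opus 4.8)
The plan is to regard Theorem~\ref{thm-1} as an existence and regularity statement for convex solutions of the oblique (Robin) boundary value problem \eqref{eq-monge-ampere-Lp}, and to attack it by the method of continuity when $p\ge n+1$ and by a topological degree argument when $1<p<n+1$, the crux being a priori $C^{2,\alpha}$ estimates uniform along the relevant deformation. The key structural observation is that if we set $v\coloneqq h/\ell$, where $\ell=\sin^{2}\theta+\cos\theta\<\xi,e\>$ is the support function of $\C_{\theta}$ itself, then the Robin condition $\n_{\mu}h=\cot\theta\,h$ is equivalent to the \emph{Neumann} condition $\n_{\mu}v=0$ on $\p\C_{\theta}$ (this is precisely why the Robin coefficient is $\cot\theta$, since one computes $\n_\mu\ell=\cos\theta\sin\theta=\cot\theta\,\ell$ there), so that the Hopf lemma is available for $v$ and, later, for ratios of two solutions. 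For the $C^{0}$ estimates the three cases diverge. When $p>n+1$: as $\C_{\theta}$ has all principal radii of curvature equal to $1$, $\ell$ solves $\det(\n^{2}\ell+\ell\s)=1$; comparing with $\det(\n^{2}h+h\s)=fh^{p-1}$ at an interior extremum of $v$ (an interior extremum, by the Neumann condition and Hopf) gives, at a maximum, $f\ell^{p-1}v^{p-1}\le v^{n}$ and, at a minimum, the reverse inequality, so $v^{p-n-1}$ is pinched between two positive constants depending only on $n,p,\min f,\max f,\theta$; since $\ell$ is pinched on $\C_{\theta}$, so is $h$. When $p=n+1$ the equation is invariant under $h\mapsto ch$, so one normalizes (say $\max_{\C_{\theta}}h=1$) and solves for the pair $(\gamma,h)$, the same comparison bounding $\gamma$ from above and below. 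When $1<p<n+1$ the pointwise comparison is vacuous, and indeed, as in the closed case \cite{CW2006-AIM}, an upper bound for $h$ need not hold for general $f$; here the capillary even hypothesis enters: using that $\xi\mapsto\widehat\xi$ is an isometry of $\C_{\theta}$ preserving \eqref{eq-monge-ampere-Lp}, one excludes the degeneration in which the origin reaches $\wh{\p\S}$ (equivalently $\min h\to0$) for capillary symmetric bodies, and an integral identity obtained by integrating the equation over $\C_{\theta}$ then controls $\max h$; this is the capillary analogue of the symmetry argument of Lutwak \cite{Lut93} and Chou--Wang \cite{CW2006-AIM}.

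Next come the gradient and Hessian estimates. Once $h$ is pinched between positive constants, the enclosed capillary convex body has bounded diameter, so $|\n h|$, being comparable to the tangential part of the position vector, is bounded; equivalently one runs a maximum-principle argument on $\tfrac12|\n h|^{2}$ with the boundary term absorbed by the Robin condition. The $C^{2}$ estimate is the main obstacle. It suffices to bound the largest eigenvalue $\lambda_{\max}$ of $A=\n^{2}h+h\s$ from above, for then $\det A=fh^{p-1}$ being pinched forces $\lambda_{\min}\ge c\,\lambda_{\max}^{-(n-1)}$, i.e. uniform ellipticity. To bound $\lambda_{\max}$ one applies the maximum principle, for the linearized operator $L=A^{ij}\n_{ij}$ with $A^{ij}$ the cofactor matrix of $A$, to an auxiliary function $W=\log\lambda_{\max}(A)+\tfrac a2|\n h|^{2}+bh$: at an interior maximum of $W$, differentiating the equation twice and using the concavity of $\log\det$ bounds $W$ in terms of $\|f\|_{C^{2}}$, $\min f$ and the $C^{1}$ bound (the factor $h^{p-1}$ only contributes lower order terms, since $h$ is pinched). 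The delicate point is a boundary maximum of $W$: there one differentiates $\n_{\mu}h=\cot\theta\,h$ along and across $\p\C_{\theta}$, uses that the second fundamental form of $\p\C_{\theta}\subset\C_{\theta}$ is explicit and constant, and shows $\n_{\mu}W\le C$, which after fixing $a,b$ large is incompatible with $W$ attaining its maximum there. Carrying out this oblique boundary computation, tracking the extra $h^{p-1}$ relative to the $p=1$ case, is the technically heaviest step and follows the pattern of \cite{MWW-AIM}.

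With $C^{2}$ bounds and uniform ellipticity the operator is uniformly elliptic and concave in $\n^{2}h$, so Evans--Krylov gives interior and boundary $C^{2,\alpha}$ estimates and the Schauder theory for oblique derivative problems bootstraps to $C^{\infty}$, with constants depending only on $n,p,\min f,\|f\|_{C^{2}(\C_{\theta})}$; this is the estimate in part (1). For existence when $p>n+1$, deform $f$ along a path $f_{t}$ from $f_{0}=\ell^{1-p}$, for which $h_{0}=\ell$ solves \eqref{eq-monge-ampere-Lp}, to $f$: openness holds because the linearization $L[\v]=A^{ij}(\n_{ij}\v+\v\s_{ij})-(p-1)fh^{p-2}\v$ with homogeneous Robin condition has zeroth order coefficient of the favourable sign when $p>n+1$ (a short computation; it vanishes at $p=n+1$ and has the wrong sign for $p<n+1$), hence is invertible by Fredholm theory and the maximum principle, while closedness is the a priori estimate; uniqueness follows by the ratio argument applied to $v=h_{1}/h_{2}$, which has an interior maximum by the Neumann condition and Hopf and there satisfies $A_{1}\le vA_{2}$, forcing $v^{p-1}\le v^{n}$ and hence $\max v\le1$, and symmetrically $\max(h_{2}/h_{1})\le1$, so $h_{1}=h_{2}$. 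For $p=n+1$ the same scheme for the normalized pair $(\gamma,h)$, with the one-dimensional dilation kernel of $L$ (spanned by $h$) absorbed by the parameter $\gamma$, gives existence and uniqueness up to dilation.

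For $1<p<n+1$, since $L$ has the unfavourable sign one argues instead within the class of capillary symmetric convex solutions and computes the Leray--Schauder degree at the constant solution: the a priori estimates, valid uniformly along a capillary even deformation of $f$ (the symmetry being preserved because \eqref{eq-monge-ampere-Lp} is invariant under $\xi\mapsto\widehat\xi$), confine the solution set, and the degree is nonzero, which produces a capillary even solution. To reiterate, the principal obstacles are the $C^{2}$ boundary estimate under the Robin condition, and, for $1<p<n+1$, the $C^{0}$ lower bound, which is exactly where the capillary even hypothesis is indispensable.
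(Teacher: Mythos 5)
Your overall architecture (reduce to a Monge--Amp\`ere/Robin problem, capillary support function $v=h/\ell$ to convert Robin to Neumann, a priori estimates plus continuity method) matches the paper for $p>n+1$, and your $C^0$ comparison at an extremum of $v$ is essentially Lemma~3.1. However, there are three substantive gaps and one incorrect claim.

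\textbf{(i) The $p=n+1$ case is not handled by your normalization.} You assert that after normalizing $\max h=1$, ``the same comparison'' bounds the pair $(\gamma,h)$. But your comparison at an extremum of $v=h/\ell$ yields $v^{\,p-1-n}\le f^{-1}\ell^{1-p}$, and at $p=n+1$ the exponent vanishes, so it gives no information about $v$ at all --- it only constrains $\gamma$. After normalizing $\max h=1$, you have no lower bound on $\min h$ (hence no non-degeneracy of the equation) and no $p$-independent estimates to pass to the limit. The paper supplies exactly the missing ingredient: a logarithmic gradient estimate $\|\nabla\log h\|_{C^0}\le C$ with $C$ independent of $p\in[n+1,n+2]$ (Lemma~3.4), proved via a new test function $\Phi=\log|\nabla(\log h-\tfrac{\cot\theta}{2\theta}d_N^2)|^2+e^{\frac{s}{2\theta}d_N^2}$ balancing the Robin boundary term against the interior curvature term, combined with approximating $p=n+1$ by $p=n+1+\varepsilon$ and invoking $p$-independent $C^2$ bounds. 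Without this, part (2) of the theorem does not follow.

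\textbf{(ii) The $C^0$ bound for $1<p<n+1$ needs a geometric inequality, not just an ``integral identity.''} Multiplying the equation by $h$ and integrating gives $\int fh^p\,d\sigma=(n+1)|\widehat\Sigma|$, but to close the estimate one needs the capillary isoperimetric inequality to bound $|\widehat\Sigma|^{n/(n+1)}$ by $\int\ell\det(\nabla^2h+h\sigma)\,d\sigma=\int\ell fh^{p-1}d\sigma$, and then H\"older's inequality exploiting $p<n+1$. The capillary evenness is used not directly to ``control $\max h$'' but to locate the origin in the interior of $\widehat{\partial\Sigma}$, which is what makes the circumradius comparison meaningful. As stated, your argument is a sketch that omits the key inequality.

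\textbf{(iii) Your $C^2$ boundary argument is not a proof.} You claim that at a boundary maximum of $W$ one shows $\nabla_\mu W\le C$ and obtains a contradiction; but $\nabla_\mu W\le C$ for a positive constant $C$ is never incompatible with a boundary maximum (one needs a strictly negative upper bound). The actual structure in the paper is a two-step reduction: (a) a Pogorelov-type argument reduces the global $C^2$ bound to the boundary double normal derivative $M=\sup_{\partial\mathcal C_\theta}|h_{\mu\mu}|$ (Lemma~3.5, where the boundary case reduces $\nabla^2h(\Xi,\Xi)$ to $|h_{\mu\mu}|+2\|h\|_{C^0}$); (b) a Lions--Trudinger--Urbas barrier $Q=\langle\nabla h,\nabla\zeta\rangle-(L+\tfrac12M)\zeta-\cot\theta\,h$, with $\zeta=e^{-d_{\partial\mathcal C_\theta}}-1$, shows $Q\ge0$ in a collar and thus $h_{\mu\mu}\le L+C\|h\|_{C^0}+\tfrac12M$, yielding $M\le C$. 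Convexity of $\partial\mathcal C_\theta\subset\mathcal C_\theta$ (i.e.\ $\theta<\pi/2$) is needed for the Hessian positivity of $\zeta$. Your proposal neither constructs this barrier nor explains how to bound $M$; the remark ``follows the pattern of \cite{MWW-AIM}'' does not supply the argument.

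\textbf{(iv) Degree theory is unnecessary for $1<p<n+1$.} You claim the linearization $L_h$ has an ``unfavourable sign'' and therefore one must use Leray--Schauder degree. In fact the paper proves (Lemma~4.1) that $\mathrm{Ker}(L_h)$ is trivial for all $p\ne n+1$, using an integration-by-parts identity together with a capillary Alexandrov--Fenchel type inequality (quoted from \cite{MWWX}), so the continuity method applies uniformly for $p>1$, $p\ne n+1$. Your invertibility-by-sign argument for $p>n+1$ is a correct but weaker observation; the actual proof of invertibility is the same for $1<p<n+1$.

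In summary, your proposal captures the high-level plan (continuity method, $v=h/\ell$, and comparison-principle $C^0$ bounds for $p>n+1$), but it omits the logarithmic gradient estimate that makes $p=n+1$ work, omits the isoperimetric-H\"older step needed for $1<p<n+1$, misdescribes the boundary $C^2$ mechanism, and introduces degree theory that the paper shows to be unnecessary.
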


As a direct consequence of Theorem~\ref{thm-1}, the capillary $L_p$-Minkowski problem is solved for all $p > 1$.

 \begin{thm}\label{thm-2}
     Let $p>1$ and $\theta\in(0, \frac{\pi}{2})$. For any positive smooth function $f$ on $\C_{\theta}$. 
     \begin{enumerate}
         \item When $p>n+1$, then there exists a unique capillary convex body such that its capillary $L_p$-surface area measure is equal to $ fd\sigma$.
         \item When $p=n+1$,  then there exists a unique positive constant $\gamma$ and a unique (up to a dilation) convex capillary body such that its capillary $L_p$-surface area measure is equal to $\gamma  fd\sigma$.
         \item When $1<p<n+1$, and $f$ is capillary even, then there exists a capillary symmetric convex body such that its capillary $L_p$-surface area measure is equal to $ fd\sigma$.
     \end{enumerate}
 \end{thm}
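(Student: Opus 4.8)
The plan is to deduce Theorem~\ref{thm-2} from Theorem~\ref{thm-1} by means of the standard dictionary between capillary convex bodies and support functions on the spherical cap $\C_\theta$. First I would recall, following \cite[Proposition~2.4]{MWW-AIM} and \cite[Lemma~2.4, Proposition~2.6]{MWWX}, that via the capillary Gauss map $\tilde\nu$ a function $h\in C^\infty(\C_\theta)$ is the support function of some $\wh\S\in\K_\theta^\circ$ if and only if $h>0$, the matrix $A=\n^2 h+h\sigma$ is positive definite on $\C_\theta$, and $h$ satisfies the Robin condition $\n_\mu h=\cot\theta\,h$ on $\p\C_\theta$; here the Robin condition encodes the constant contact angle $\theta$, while the positivity of $h$ is exactly the requirement that the origin be an interior point of the flat part $\wh{\p\S}$, i.e.\ that $\wh\S\in\K_\theta^\circ$ rather than merely $\K_\theta$. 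This correspondence is injective, and dilations $\wh\S\mapsto\lambda\wh\S$ ($\lambda>0$) correspond to $h\mapsto\lambda h$.

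Second, I would translate the prescribed-measure condition into a Monge-Amp\`ere equation. By the very definition $dS^c_p=\ell\,h^{1-p}\det(\n^2 h+h\sigma)\,d\sigma$, and since $\ell=\sin^2\theta+\cos\theta\langle\xi,e\rangle>0$ on $\C_\theta$, the identity $dS^c_p=\ell f\,d\sigma$ holds if and only if $\det(\n^2 h+h\sigma)=f h^{p-1}$ on $\C_\theta$; combined with the Robin condition inherent in membership in $\K_\theta^\circ$, this is precisely the boundary value problem \eqref{eq-monge-ampere-Lp}. Similarly $dS^c_p=\gamma\ell f\,d\sigma$ corresponds to the rescaled equation $\det(\n^2 h+h\sigma)=\gamma f h^{p-1}$ of Theorem~\ref{thm-1}(2). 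Hence, in each regime, finding a capillary convex body with the prescribed capillary $L_p$-surface area measure is equivalent to finding a positive convex solution of the corresponding problem already solved in Theorem~\ref{thm-1}.

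Finally, I would apply Theorem~\ref{thm-1} case by case. For $p>n+1$ it provides a unique smooth solution $h$ with $A>0$, which the dictionary turns into the desired body; uniqueness of $h$ together with injectivity of the dictionary gives uniqueness of the body. For $p=n+1$ it provides a unique constant $\gamma>0$ and a solution $h$ that is unique up to $h\mapsto\lambda h$, hence a body unique up to dilation. For $1<p<n+1$ with $f$ capillary even it provides a capillary even solution $h$, and since a body is by definition capillary symmetric exactly when its support function is capillary even, we obtain the asserted capillary symmetric body. The only input not already contained in Theorem~\ref{thm-1} is the strict positivity $h>0$ of the solutions it produces — needed so that the associated body lies in $\K_\theta^\circ$ — and this is the step I would expect to require the most care: it should be read off either from the $C^0$ a priori estimates underlying Theorem~\ref{thm-1} or directly from \eqref{eq-monge-ampere-Lp} via the maximum principle, since at an interior minimum of $h$ one has $\n^2 h\ge 0$, so $\det(\n^2 h+h\sigma)=f h^{p-1}>0$ forces $h>0$ there, with the Robin condition controlling boundary minima through a Hopf-type argument (using $\cot\theta>0$ for $\theta\in(0,\pi/2)$).
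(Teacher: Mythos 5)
Your argument is correct and follows exactly the paper's route: the paper proves Theorem~\ref{thm-2} by noting it ``follows directly from Theorem~\ref{thm-1} and the identity~\eqref{capillary-L-p-area},'' which is precisely the translation you carry out. The worry you raise at the end about strict positivity of $h$ is unnecessary: positivity is already built into the conclusion of Theorem~\ref{thm-1} (its statement says ``positive smooth solution,'' and the a priori estimate~\eqref{thm-lower} gives a uniform positive lower bound on $\min_{\C_\theta}h$), so no separate maximum-principle or Hopf argument is needed at this stage.
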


The restriction $\theta \in (0,\frac \pi 2)$ is crucially used in this paper to deal with the boundary condition, as in many related problems for capillary hypersurfaces, see for instance \cite{HWYZ, mei-wang-weng-2025convexcapillaryhypersurfacesprescribed, MWW-GCF, MWW-AIM, SW, WWX-MA2024, Weng-Xia2022}. The case $\theta \in (\frac \pi 2, \pi)$ remains as a challenging problem. From the analytical perspective, $\theta >\frac \pi 2$ leads to a sign inconsistency in the application of the maximum principle. Geometrically, in the case $\theta >\frac  \pi 2$,  convex capillary hypersurfaces fail to be convex in the sense of Busemann \cite{Busemann} for hypersurfaces with boundary.

\subsection{Outline of the proof}
We first point out that Theorem \ref{thm-2} follows directly from Theorem \ref{thm-1} and the identity \eqref{capillary-L-p-area} below. To prove Theorem \ref{thm-1}, we apply the continuity method, following the approach in \cite{GL2000, MWW-AIM}. The crucial ingredient is to establish a priori estimates for solutions to Eq. \eqref{eq-monge-ampere-Lp}. First of all, we establish uniform positive lower and upper bounds for solutions, as the absence of a uniform lower bound would cause Eq.~\eqref{eq-monge-ampere-Lp} to degenerate. When $p>n+1$, we proceed by using the capillary support function:
\begin{eqnarray*}
    u(\xi)\coloneqq \frac{h(\xi)}{\sin^{2}\theta+\cos\theta \<\xi, e\>},
\end{eqnarray*}
which satisfies a Monge--Amp\`ere type equation with homogeneous Neumann boundary conditions, see Eq. \eqref{eq-monge-ampere-Lp-u} below. This function is also crucially used in \cite{MWW-AIM}. By applying the maximum principle, we derive uniform lower and upper bounds for the solutions. When $1<p<n+1$, by combining the capillary isoperimetric inequality (cf. \cite[Corollary 1.3, and Eq. (1.7)]{MWW24-IMRN} or \cite[Chapter 19]{Maggi}),  H\" older's inequality and the special structure of Eq. \eqref{eq-monge-ampere-Lp}, we deduce a positive upper bound for the outer radius and a lower bound of the volume of the associated capillary convex body. Under the assumption of capillary evenness, we obtain positive lower and upper bounds for the solutions. Similar to the classical case, when $p\neq n+1$, the $C^1$ estimate follows directly from the convexity and $C^{0}$ estimate.  

When $p=n+1$, the dilation invariance of Eq. \eqref{eq-monge-ampere-Lp} implies the lack of an estimate for its solution. To address this problem,  we adopt the approach presented in  \cite[Section~3]{GL2000} or \cite[Section~4]{HMS2004}. The key step is to establish a logarithmic gradient estimate for the positive solution to Eq. \eqref{eq-monge-ampere-Lp} as
\begin{eqnarray}\label{log-h-est}
    \|\n \log h\|_{C^0(\C_\theta)}\leq C,
\end{eqnarray} where $C$ is independent of $p$.  In contrast to the classical closed case, the Robin boundary condition causes additional difficulties for us in obtaining this estimate. To address this, we introduce a new test function:  
\begin{eqnarray}\label{Phi}
  \Phi\coloneqq  \log\left|\n \left(\log h-\frac{\cot\theta}{2\theta} d^2_N\right) \right|^2+e^{\frac{s}{2\theta} d^2_N},
\end{eqnarray} where $d_N$ is the geodesic distance function on $\C_\theta$ and $s>0$ is a constant, see Section \ref{sec3.2} for more details. The function defined in \eqref{Phi} balances the interior and boundary terms, enabling effective use of the maximum principle. Thus, we are able to derive the desired estimate \eqref{log-h-est}.
For the $C^{2}$ estimate, we adopt the approach initiated by Lions-Trudinger-Urbas  \cite{LTU} for studying the Monge--Amp\`ere equation with a Neumann boundary condition on a uniformly convex domain in Euclidean space, see also \cite{MQ} for the analogous result for the $k$-Hessian equation with a Neumann boundary condition on a uniformly convex domain. The proof is primarily divided into two steps. First, we reduce the global  $C^{2}$ estimate to the boundary double normal estimate, as shown in Lemma \ref{lem c2 boundary}. Second, we deduce the boundary double normal $C^2$ estimate by constructing a suitable test function, as presented in Lemma \ref{c2 est}. We conclude by remarking that the range $\theta \in (0, \frac{\pi}{2})$ plays a crucial role in deriving the global $C^2$ estimate, as it guarantees that $\partial\C_{\theta} \subset \C_\theta$ is strictly convex. We point out that this range restriction is not required for us in establishing the $C^0$ and $C^1$ estimates for Eq. \eqref{eq-monge-ampere-Lp}, which remain valid for all $\theta\in(0,\pi)$.

\

Throughout the remainder of this paper, we assume that $p > 1$ and that $f: \C_{\theta} \to \RR$ is a positive smooth function. We shall indicate explicitly when the capillary evenness condition on $f$ is required.

\

\subsection*{Organization of the paper.} 
In Section \ref{sec2}, we introduce the capillary $L_p$-surface area measure of the capillary convex body and the corresponding capillary   $L_p$ Brunn--Minkowski inequality. In Section \ref{sec-3}, after transforming the capillary $L_p$-Minkowski problem into solving Eq. \eqref{eq-monge-ampere-Lp}, we will establish a priori estimates for the solution to Eq. \eqref{eq-monge-ampere-Lp}.
In Section \ref{sec-4}, we employ the continuity method to complete the proof of Theorem \ref{thm-1}.

\section{Capillary \texorpdfstring{$L_p$}{}-surface area measure}\label{sec2}  

In this section, we briefly review some properties of the capillary convex body and introduce the capillary $L_p$-surface area measure for the capillary convex body in $\ol{\RR^{n+1}_+}$. For more details concerning the geometry of capillary convex bodies, we refer the reader to \cite[Section 2]{MWW-AIM} and \cite[Section 2]{MWWX}. 
Let  $\widehat{\S}\in \K_{\theta}$ and $\nu$ be the unit outward normal of $\S$ in $\ol{\RR^{n+1}_+}$, the contact angle $\theta $ is defined by $$ \cos (\pi -\theta)=\langle \nu, e \rangle,\quad \text{along }~\partial\S,$$
	where $e= -E_{n+1}$ is the unit outward normal of $\p \ov{\RR^{n+1}_+}$.

If $\Sigma$ is strictly convex, the support function of $\Sigma$ is given by 
	\begin{eqnarray*}
		h(X)=\<X,\nu(X)\>.
	\end{eqnarray*}
	By the parametrization of $\tilde\nu^{-1}$, we can view $h$ as a function on $\C_\theta$ by
	\begin{eqnarray}\label{support}
		h(\xi) =\<X(\xi),\nu(X(\xi ))\>=
		\< \tilde \nu ^{-1}(\xi)  , \xi -\cos\theta e\>,
	\end{eqnarray}and satisfies
 \begin{eqnarray*} 
     \n_\mu h=\cot\theta h, ~~~~~~~~~\text{ on } \p \C_\theta.
 \end{eqnarray*} cf. \cite[Lemma 2.4]{MWWX}. We still call $h$ in \eqref{support} the support function of $\S$ (or of $\widehat{\S}$), which is only defined on $\C_\theta$. It is clear that for $\S= \C_\theta$, $\tilde \nu$ is the identity map and its support function is given by	\begin{eqnarray*}
		\ell(\xi)=\<\xi  , \xi -\cos\theta e\>=
		\sin ^2\theta + \cos\theta\< \xi, e\>.
	\end{eqnarray*}

Next, we recall Firey's $p$-sum in \cite{Firey} and adapt it to our capillary setting. 
\begin{defn}
    Given $\wh{\S}_1, \wh{\S}_2\in \K_{\theta}^\circ$, let $h_{\wh{\S}_1}(\cdot)$ and $h_{\wh{\S}_2}(\cdot)$ be the support functions of $\wh{\S}_1, \wh{\S}_2$ resp. Then  for $\lambda,\tau\geq 0$ with $\l^2+\tau^2\neq 0$ and $p\geq 1$, we define the  function 
    \begin{eqnarray*}
        h_{\lambda\cdot {\wh{\S}_1}+_{p}\tau\cdot {\wh{\S}_2}}(\cdot)\coloneqq  
 \left[\lambda h^{p}_{\wh{\S}_1}(\cdot)+\tau h^{p}_{\wh{\S}_2}(\cdot)\right]^{\frac 1 p}.
    \end{eqnarray*}
\end{defn}

By definition, it is obvious that the function $h_{\lambda\cdot {\wh{\S}_1}+_{p}\tau\cdot {\wh{\S}_2}}$ is positively homogeneous and nonnegative, moreover, it is the
 support function of some capillary convex body in $\K^\circ_\theta$.
\begin{prop}For $\wh\S_1,\wh\S_2\in \K^\circ_\theta$, then 
$h_{\lambda\cdot {\wh{\S}_1}+_{p}\tau\cdot {\wh{\S}_2}}$    is the support function of a capillary convex body in $\K_{\theta}^{\circ}$. As a result, we denote such a capillary convex body as $\lambda\cdot\widehat{\S}_{1}+_{p}\tau \cdot \widehat{\S}_{2}$.
\end{prop}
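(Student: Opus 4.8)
The plan is to verify that the function $h\coloneqq h_{\lambda\cdot\wh\S_1+_{p}\tau\cdot\wh\S_2}=(\lambda h_1^{p}+\tau h_2^{p})^{1/p}$, where $h_i\coloneqq h_{\wh\S_i}$, satisfies the three conditions that single out support functions of bodies in $\K^{\circ}_{\theta}$: (i) $h>0$ on $\C_{\theta}$; (ii) the convexity condition $A\coloneqq\n^{2}h+h\sigma>0$ on $\C_{\theta}$; (iii) the Robin boundary condition $\n_{\mu}h=\cot\theta\,h$ on $\p\C_{\theta}$. Once (i)--(iii) hold, the correspondence between capillary convex bodies and their support functions on $\C_{\theta}$ (cf. \cite[Lemma~2.4]{MWWX} and \cite[Proposition~2.4]{MWW-AIM}) furnishes the desired body, which we then name $\lambda\cdot\wh\S_1+_{p}\tau\cdot\wh\S_2$. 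Condition (i) is immediate: since $\wh\S_1,\wh\S_2\in\K^{\circ}_{\theta}$, the functions $h_1,h_2$ are smooth and positive on $\C_{\theta}$ with $\n^{2}h_i+h_i\sigma>0$, and since $\lambda,\tau\ge0$ with $\lambda^{2}+\tau^{2}\neq0$ the formula forces $h>0$, hence $h$ is smooth on $\C_{\theta}$.

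For (iii) I would differentiate the identity $h^{p}=\lambda h_1^{p}+\tau h_2^{p}$ in the direction of the inner unit normal $\mu$ along $\p\C_{\theta}$: the chain rule gives $p\,h^{p-1}\n_{\mu}h=\lambda p\,h_1^{p-1}\n_{\mu}h_1+\tau p\,h_2^{p-1}\n_{\mu}h_2$, and inserting $\n_{\mu}h_i=\cot\theta\,h_i$ turns the right-hand side into $p\cot\theta\,(\lambda h_1^{p}+\tau h_2^{p})=p\cot\theta\,h^{p}$; dividing by $p\,h^{p-1}>0$ yields $\n_{\mu}h=\cot\theta\,h$ on $\p\C_{\theta}$.

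The crux is (ii), and this is precisely where the hypothesis $p\ge1$ enters. Differentiating $h^{p}=\lambda h_1^{p}+\tau h_2^{p}$ once gives $\n h=h^{1-p}(\lambda h_1^{p-1}\n h_1+\tau h_2^{p-1}\n h_2)$; differentiating a second time and regrouping, using also $h\sigma=h^{1-p}(\lambda h_1^{p}+\tau h_2^{p})\sigma$, leads to
\begin{equation*}
\n^{2}h+h\sigma=h^{1-p}\big(\lambda h_1^{p-1}(\n^{2}h_1+h_1\sigma)+\tau h_2^{p-1}(\n^{2}h_2+h_2\sigma)\big)+(p-1)\,Q,
\end{equation*}
where $Q\coloneqq h^{1-p}(\lambda h_1^{p-2}\n h_1\otimes\n h_1+\tau h_2^{p-2}\n h_2\otimes\n h_2)-h^{-1}\,\n h\otimes\n h$. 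For a tangent vector $v$, writing $a=\langle\n h_1,v\rangle$ and $b=\langle\n h_2,v\rangle$ and multiplying through by the positive factor $h^{2p-1}$, the inequality $Q(v,v)\ge0$ becomes $(\lambda h_1^{p}+\tau h_2^{p})(\lambda h_1^{p-2}a^{2}+\tau h_2^{p-2}b^{2})\ge(\lambda h_1^{p-1}a+\tau h_2^{p-1}b)^{2}$, which is exactly the Cauchy--Schwarz inequality for the vectors $(\sqrt{\lambda}\,h_1^{p/2},\sqrt{\tau}\,h_2^{p/2})$ and $(\sqrt{\lambda}\,h_1^{p/2-1}a,\sqrt{\tau}\,h_2^{p/2-1}b)$. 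Since $p\ge1$, the term $(p-1)Q$ is nonnegative, and as at least one of $\lambda,\tau$ is positive while $\wh\S_1,\wh\S_2$ are strictly convex, we conclude $\n^{2}h+h\sigma\ge h^{1-p}\big(\lambda h_1^{p-1}(\n^{2}h_1+h_1\sigma)+\tau h_2^{p-1}(\n^{2}h_2+h_2\sigma)\big)>0$. This is the only genuine computation and, I expect, the main obstacle of the proof; for $p<1$ the sign of $(p-1)Q$ would reverse, which is the analytic reason the construction is restricted to $p\ge1$.

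With (i)--(iii) established, $h$ is the support function, in the $\C_{\theta}$-parametrization, of a smooth strictly convex capillary convex body, and since $h>0$ on $\C_{\theta}$ the origin lies in the interior of its flat boundary part, so this body belongs to $\K^{\circ}_{\theta}$; calling it $\lambda\cdot\wh\S_1+_{p}\tau\cdot\wh\S_2$ completes the proof. (Equivalently, one may run the argument with the classical support functions $H_i$ of $\wh\S_i$ on all of $\SS^{n}$: $(\lambda H_1^{p}+\tau H_2^{p})^{1/p}$ is sublinear by Minkowski's inequality, hence the support function of a convex body $K\subset\ol{\RR^{n+1}_+}$ --- the inclusion because $H_1,H_2$ vanish in the direction $e$ --- whose flat part contains the origin by $L_p$-monotonicity applied to small disks of $\p\RR^{n+1}_+$ inside $\wh\S_1,\wh\S_2$; the contact-angle condition for $K$ is then identity (iii).)
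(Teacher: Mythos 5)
Your proof is correct and follows essentially the same route as the paper's: the paper cites \cite[Definition~2.5, Proposition~2.6]{MWWX} to assert that $h_{\lambda\cdot\wh\S_1+_p\tau\cdot\wh\S_2}$ is spherically convex for $p\geq 1$ and satisfies the Robin condition, and then invokes the support-function characterization; you verify exactly those conditions, simply supplying the explicit Cauchy--Schwarz computation for the spherical convexity (and the one-line chain-rule check for the Robin condition) where the paper says \enquote{one can easily see}.
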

\begin{proof}
 From \cite[Definition 2.5]{MWWX}, one can easily see that $ h_{\lambda\cdot {\wh{\S}_1}+_{p}\tau\cdot {\wh{\S}_2}}(\cdot)$ is a capillary convex function. Namely, it is spherical convex in $\C_\theta$ if $p\geq 1$, and satisfies
 \begin{eqnarray*}
     \n_\mu h_{\lambda\cdot {\wh{\S}_1}+_{p}\tau\cdot {\wh{\S}_2}}=\cot\theta   h_{\lambda\cdot {\wh{\S}_1}+_{p}\tau\cdot {\wh{\S}_2}} ~~~ \text{ on }\p \C_\theta. 
 \end{eqnarray*}Hence by \cite[Proposition 2.6]{MWWX}, we know that $ h_{\lambda\cdot {\wh{\S}_1}+_{p}\tau\cdot {\wh{\S}_2}}$ is the support function of a capillary convex body in $\mathcal{K}^{\circ}_{\theta}$.
\end{proof}

 Recall we have the capillary area measure (the so-called wetting energy in \cite{Finn} or \cite[Eq. (1.2)]{MWW-AIM}) for the capillary convex body $\wh\S\in \K_{\theta}$ as
 \begin{eqnarray*}
    dS^c\coloneqq  dS^c_{\wh\S} \coloneqq \left(1+\cos\theta \<\nu(X), e\>\right) dS_{\wh\S},
 \end{eqnarray*}where $dS_{\wh\S}$ is the surface area measure of $\wh\S\cap \RR^{n+1}_+$ and the superscript $c$ indicates the presence of the \textit{capillary} effect in our notation. By means of the inverse of the capillary Gauss map $\tilde{\nu}$, and under the assumption that $\wh\S\cap \RR^{n+1}_+=\S$ is smooth and strictly convex, then it is also equivalent to the following form:
 \begin{eqnarray*}
 dS^c_{\wh\S} =  \ell \det(\n^2 h_{\wh\S}+h_{\wh\S} \s) d\s.
 \end{eqnarray*} 

 For $\varepsilon>0$, consider a smooth one-parameter family of capillary convex bodies: $K_\varepsilon\coloneqq K+_p\varepsilon L$, where $K,L\in \mathcal{K}^{\circ}_{\theta}$, then
 \begin{eqnarray*}
     \frac{d}{d\varepsilon}\bigg|_{\varepsilon=0} h_{K_\varepsilon}(\xi)= \frac 1 p h_K^{1-p}(\xi) h_L^p(\xi),~~~~\xi\in \C_\theta.
 \end{eqnarray*}Thus, the first variation of the volume under Firey's $L_p$ perturbation in direction $L$ yields
 \begin{eqnarray*}
      \frac{d}{d\varepsilon}\bigg|_{\varepsilon=0} \int_{\C_\theta} h_{K_{\varepsilon}}\det \left(\n^{2}(h_{K_{\varepsilon}})+h_{K_{\varepsilon}}\s \right) d \s&=& \frac {n+1} {p} \int_{\C_\theta}  h_L^p h_K^{1-p}\det\left(\n^{2}h_{K}+h_{K}\s\right) d\s.      \\&\eqqcolon & \frac {n+1} {p} \int_{\C_\theta}  h_L^p h_K^{1-p} dS_K . 
 \end{eqnarray*} Hence, this variational formula naturally motivates the introduction of a capillary $L_p$-surface area measure, which captures the weighted infinitesimal deformation of one capillary convex body $(L)$ by another $(K)$ under the influence of capillarity.

\begin{defn}[Capillary $L_p$-surface area measure]\label{def-Lp-surface-area}
    For $p\geq 1$, and $\widehat{\S}\in \mathcal{K}_{\theta}^{\circ}$, we define 
    \begin{eqnarray*}
    dS^c_{\wh\S,p}\coloneqq  h_{\wh\C_\theta}^p  h_{\wh\S}^{1-p} dS_{\wh\S} =\ell^p h_{\wh\S}^{1-p} dS_{\wh\S},
    \end{eqnarray*}and call it the capillary $L_p$-surface area measure of the capillary convex body $\wh\S$.
    \end{defn}
If without confusion, we will omit the subscript indicating its associated capillary convex body and write $h_{\wh\S}, dS_{\wh\S}, dS_{\wh\S,p}$ simply as $h, dS^c, dS^c_p$, respectively. 
    Through the inverse capillary Gauss map $\tilde \nu$, it is easy to see that 
    \begin{eqnarray}\label{capillary-L-p-area}
        dS^c_{p}=  \ell^p h ^{1-p}dS_{\wh\S}.
    \end{eqnarray}
In particular, when $\theta = \frac{\pi}{2}$, the measure $dS^{c}_{p}$ coincides with Lutwak’s classical $L_p$-surface area measure defined in \eqref{Lp-surface-area-mea}. Moreover, when $p=1$, we have
\begin{eqnarray*}
    dS^{c}_{1} = \ell\, dS_{\widehat{\Sigma}},
 \end{eqnarray*}
which agrees with the capillary area measure $m_\theta$ introduced in \cite[Page~5]{MWW-AIM}.

Now we define  \textit{capillary $L_p$-mixed volume} of $\wh\S_1,\wh\S_2\in \K^\circ_\theta$ by
$$V_p^c(\wh\S_1,\wh\S_2)\coloneqq\frac 1  {n+1} \int_{\C_\theta}  h_{\wh\S_2}^p h_{\wh\S_1}^{1-p} dS_{\wh\S_1}^c.  ~~~~~~~p\geq 1. $$ 
Analogously to the classical case, we have the capillary $L_p$ Brunn–Minkowski inequality, which may be of independent interest and potentially applicable to other problems for capillary hypersurfaces.
\begin{prop}
    Let $p>1$ and $\wh\S_1,\wh\S_2\in \K^\circ_\theta$, then the following hold:
\begin{enumerate}
\item  $ V_p^c(\wh\S_1,\wh\S_2)\geq V_p^c(\wh\S_1,\wh\S_1)^{1-p}~V_1^c(\wh\S_1,\wh\S_2)^p$.
 
\item $|\wh\S_1+_p\wh\S_2|\geq  
 \left(V_p^c(\wh\S_1,\wh\S_1)^p+V_1^c(\wh\S_1,\wh\S_2)^p \right)^{\frac 1 p}.$
\end{enumerate}
\end{prop}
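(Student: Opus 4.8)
My plan is to read the first inequality as a direct application of Hölder's inequality, and to deduce the second from the first together with an elementary additivity identity for $V^c_p$ and the monotonicity of the capillary mixed volume $V^c_1(\cdot,B)$ under inclusion. Throughout I use, as everywhere in the paper, that the bodies are smooth and strictly convex, so $\n^2 h_{\wh\S_i}+h_{\wh\S_i}\s>0$.

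For the first inequality, recall $V^c_1(\wh\S_1,\wh\S_2)=\frac{1}{n+1}\int_{\C_\theta}h_{\wh\S_2}\,dS^c_{\wh\S_1}$ and that the diagonal value $\frac{1}{n+1}\int_{\C_\theta}h_{\wh\S_1}\,dS^c_{\wh\S_1}$ equals $V^c_p(\wh\S_1,\wh\S_1)$ for every $p$ (its integrand being $h^p_{\wh\S_1}h^{1-p}_{\wh\S_1}=h_{\wh\S_1}$), and that this is the value $|\wh\S_1|$. Writing $h_{\wh\S_2}=\bigl(h_{\wh\S_2}h_{\wh\S_1}^{(1-p)/p}\bigr)\cdot h_{\wh\S_1}^{(p-1)/p}$ and applying Hölder's inequality with the conjugate exponents $p$ and $\tfrac{p}{p-1}$ against the nonnegative measure $dS^c_{\wh\S_1}$ gives
\[
\int_{\C_\theta}h_{\wh\S_2}\,dS^c_{\wh\S_1}\le\Bigl(\int_{\C_\theta}h^{p}_{\wh\S_2}h^{1-p}_{\wh\S_1}\,dS^c_{\wh\S_1}\Bigr)^{1/p}\Bigl(\int_{\C_\theta}h_{\wh\S_1}\,dS^c_{\wh\S_1}\Bigr)^{(p-1)/p}.
\]
Dividing by $n+1$ and raising to the power $p$ yields $V^c_1(\wh\S_1,\wh\S_2)^p\le V^c_p(\wh\S_1,\wh\S_2)\,V^c_p(\wh\S_1,\wh\S_1)^{p-1}$, which rearranges to $(1)$; the equality case is the proportionality $h^p_{\wh\S_2}\propto h^p_{\wh\S_1}$ on $\C_\theta$ forced by equality in Hölder, should one wish to record it.

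For the second inequality, put $\wh\S\coloneqq\wh\S_1+_p\wh\S_2$, so $h^p_{\wh\S}=h^p_{\wh\S_1}+h^p_{\wh\S_2}$; in particular $h_{\wh\S}\ge h_{\wh\S_i}$ pointwise, hence $\wh\S_1,\wh\S_2\subseteq\wh\S$. From $h_{\wh\S}=h^p_{\wh\S}h^{1-p}_{\wh\S}=\bigl(h^p_{\wh\S_1}+h^p_{\wh\S_2}\bigr)h^{1-p}_{\wh\S}$ and $|\wh\S|=\frac{1}{n+1}\int_{\C_\theta}h_{\wh\S}\,dS^c_{\wh\S}$ one gets the additivity $|\wh\S|=V^c_p(\wh\S,\wh\S_1)+V^c_p(\wh\S,\wh\S_2)$. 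Applying $(1)$ to each summand (with first body $\wh\S$) gives $V^c_p(\wh\S,\wh\S_i)\ge|\wh\S|^{1-p}V^c_1(\wh\S,\wh\S_i)^p$, and the monotonicity of $V^c_1(\cdot,B)$ with $\wh\S_1,\wh\S_2\subseteq\wh\S$ gives $V^c_1(\wh\S,\wh\S_1)\ge V^c_1(\wh\S_1,\wh\S_1)=V^c_p(\wh\S_1,\wh\S_1)$ and $V^c_1(\wh\S,\wh\S_2)\ge V^c_1(\wh\S_1,\wh\S_2)$. Combining,
\[
|\wh\S|\ge|\wh\S|^{1-p}\bigl(V^c_p(\wh\S_1,\wh\S_1)^p+V^c_1(\wh\S_1,\wh\S_2)^p\bigr),
\]
and dividing by $|\wh\S|^{1-p}>0$ and taking $p$-th roots gives $(2)$.

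The step with real content is the monotonicity used above: $\wh\S\subseteq\wh\S'$ implies $\int_{\C_\theta}h_B\,dS^c_{\wh\S}\le\int_{\C_\theta}h_B\,dS^c_{\wh\S'}$. A pointwise comparison of the densities $\ell\det(\n^2h_{\wh\S}+h_{\wh\S}\s)$ is false and $h_{\wh\S'}-h_{\wh\S}$ need not be a support function, so this is a genuinely global statement about mixed volumes. I would establish it exactly as in the classical theory: polarize $dS^c_{\wh\S}$ into the capillary mixed area measures $dS^c(\wh\S_1,\dots,\wh\S_n;\cdot)\coloneqq\ell\,\mathrm{D}(\n^2h_{\wh\S_1}+h_{\wh\S_1}\s,\dots,\n^2h_{\wh\S_n}+h_{\wh\S_n}\s)\,d\s$, with $\mathrm{D}$ the mixed discriminant, so that $V^c_1(\wh\S,B)=\frac{1}{n+1}\int_{\C_\theta}h_B\,dS^c(\wh\S,\dots,\wh\S;\cdot)$; in the smooth strictly convex setting these measures are positive since $\ell>0$ on $\C_\theta$ (as $\theta\in(0,\frac{\pi}{2})$) and the mixed discriminant of positive-definite matrices is positive. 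The crux is the symmetry of the capillary mixed volume $V^c(\wh\S_0;\wh\S_1,\dots,\wh\S_n)\coloneqq\frac{1}{n+1}\int_{\C_\theta}h_{\wh\S_0}\,dS^c(\wh\S_1,\dots,\wh\S_n;\cdot)$ in all $n+1$ slots: this is an integration-by-parts identity whose boundary integrals over $\p\C_\theta$ cancel precisely because every $h_{\wh\S_j}$ obeys the Robin condition $\n_\mu h=\cot\theta\,h$ — the same mechanism behind the first-variation computations of \cite[Proposition~2.4]{MWW-AIM} and the capillary mixed-volume theory of \cite{MWW24-IMRN}, which I would invoke. Granting symmetry and positivity, one passes from $\wh\S$ to $\wh\S'$ in the $n$ repeated slots one at a time, each time moving the modified slot into the $\wh\S_0$-position by symmetry and using $h_{\wh\S}\le h_{\wh\S'}$ against the fixed positive remaining measure. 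I expect this symmetry/integration-by-parts input (and checking that the boundary terms really do vanish for all the support functions involved) to be where the care is needed; the remaining assembly is formal.
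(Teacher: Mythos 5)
Your proof of part (1) is correct: the Hölder step against the positive measure $dS^c_{\wh\S_1}$ is precisely what is needed, and after cancelling the $n+1$ factors it gives $V^c_1(\wh\S_1,\wh\S_2)^p\le V^c_p(\wh\S_1,\wh\S_2)\,V^c_p(\wh\S_1,\wh\S_1)^{p-1}$, which rearranges to (1).

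Your argument for part (2), however, rests on two false identities, both traceable to ignoring the weight $\ell=\sin^2\theta+\cos\theta\langle\xi,e\rangle$ carried by the capillary area measure $dS^c_{\wh\S}=\ell\det(\n^2 h_{\wh\S}+h_{\wh\S}\s)\,d\s$. First, $|\wh\S|\ne\frac{1}{n+1}\int_{\C_\theta}h_{\wh\S}\,dS^c_{\wh\S}$: the Lebesgue volume equals $\frac{1}{n+1}\int_{\C_\theta}h_{\wh\S}\det(\n^2 h_{\wh\S}+h_{\wh\S}\s)\,d\s$ with \emph{no} $\ell$ (this is the identity used in the $C^0$-estimate, coming from the divergence theorem on $\wh\S$), and since $0<\ell\le\sin^2\theta<1$ on $\C_\theta$ for $\theta\in(0,\frac\pi2)$ one has $V^c_p(\wh\S_1,\wh\S_1)<|\wh\S_1|$, not equality. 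Accordingly your "additivity" actually reads $V^c_p(\wh\S,\wh\S)=V^c_p(\wh\S,\wh\S_1)+V^c_p(\wh\S,\wh\S_2)$, which is not the volume $|\wh\S|$. Second, and more seriously, the symmetry of the $\ell$-weighted mixed integral that you invoke to obtain the monotonicity of $V^c_1(\cdot,B)$ under inclusion is not a question of boundary terms cancelling: it fails because of \emph{interior} terms produced when the derivatives fall on $\ell$. Already for $n=1$, with $h_0,h_1$ both satisfying $\n_\mu h=\cot\theta\,h$,
\begin{equation*}
\int_{\C_\theta}h_0\,\ell\,(h_1''+h_1)\,ds-\int_{\C_\theta}h_1\,\ell\,(h_0''+h_0)\,ds
=\bigl[\ell\,(h_0h_1'-h_1h_0')\bigr]_{\p\C_\theta}-\int_{\C_\theta}\ell'\,(h_0h_1'-h_1h_0')\,ds,
\end{equation*}
and while the boundary bracket vanishes by the Robin condition, the remaining integral of the Wronskian against $\ell'$ does not; the same $\ell_j D^{ij}$-type interior obstruction appears in every dimension. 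So the capillary mixed volume built from $dS^c$ is genuinely not symmetric, and the telescoping argument you sketch cannot establish the monotonicity $\wh\S\subseteq\wh\S'\Rightarrow V^c_1(\wh\S,B)\le V^c_1(\wh\S',B)$.

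The repair is to carry out your Hölder/additivity argument against the \emph{unweighted} measure $\det(\n^2 h_{\wh\S}+h_{\wh\S}\s)\,d\s$: from the true volume identity and $h_{\wh\S}^p=h_{\wh\S_1}^p+h_{\wh\S_2}^p$ one obtains, exactly as you computed, $|\wh\S|^p\ge V(\wh\S_1,\wh\S[n])^p+V(\wh\S_2,\wh\S[n])^p$, where $V(B,\wh\S[n])\coloneqq\frac{1}{n+1}\int_{\C_\theta}h_B\det(\n^2 h_{\wh\S}+h_{\wh\S}\s)\,d\s$ is the ordinary mixed volume of convex bodies in $\RR^{n+1}$ (the flat face of $\wh\S$ contributes nothing since $h_B(-E_{n+1})=0$ for $B\in\K^\circ_\theta$). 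For this unweighted quantity the symmetry you want \emph{does} hold on the cap: the integration-by-parts boundary bracket over $\p\C_\theta$ is exactly the one above and cancels by the Robin condition, while the $\ell'$-obstruction is absent. Monotonicity then follows (or, more simply, one notes $\wh\S_1,\wh\S_2\subseteq\wh\S$ are ordinary convex bodies and cites classical mixed-volume monotonicity), giving $V(\wh\S_1,\wh\S[n])\ge|\wh\S_1|\ge V^c_p(\wh\S_1,\wh\S_1)$ and $V(\wh\S_2,\wh\S[n])\ge V(\wh\S_2,\wh\S_1[n])\ge V^c_1(\wh\S_1,\wh\S_2)$, the last inequalities each using $\ell<1$. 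This yields (2) without invoking any symmetry or monotonicity of the $\ell$-weighted quantities, which, as noted, is unavailable.
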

The proofs are similar to those in \cite[Theorem 9.1.2, Theorem 9.1.3]{Sch} and are omitted here for conciseness. 

\ 

As a concluding remark to this section, we can introduce a more general area measure for the capillary convex body $\wh{\S} \in \K_\theta^\circ$, which we will refer to as the \textit{$k$-th capillary $L_p$ area measure}, denoted by $S^c_{k,p}(\wh\S,\cdot)$. In particular, when $k = n$, it coincides with the \textit{capillary $L_p$-surface area measure} defined in Definition~\ref{def-Lp-surface-area}, and when $p = 1$, it reduces to the $k$-th capillary area measure $dS_k^c$ studied in \cite{MWW-CM}. This naturally motivates the consideration of the capillary counterpart of the intermediate Christoffel–Minkowski problem associated with $p$-sums, namely, the \textit{capillary $L_p$ Christoffel–Minkowski problem}. To maintain our focus on the $L_p$-Minkowski problem and to avoid diverging from the main theme of this paper, we postpone the study of this problem to a forthcoming work \cite{MWW-Lp-CM}.

\ 

In the rest of this paper, for convenience, we adopt a local frame to express tensors and their covariant derivatives on the spherical cap $\C_\theta$. Throughout, indices appearing as subscripts on tensors indicate covariant differentiation. For instance, given an orthonormal frame $\{e_i\}_{i=1}^n$ on $\C_\theta$, the notation $h_{ij}$ stands for $\nabla^2 h(e_i,e_j)$, and $A_{ijk} \coloneqq \nabla_{e_k}A_{ij}$, and so forth. We employ the Einstein summation convention: repeated indices are implicitly summed over, regardless of whether they appear as upper or lower indices. In cases where ambiguity may arise, summation will be indicated explicitly.

\section{A priori estimates}\label{sec-3} \ 
In this section, we establish a priori estimates for any solution to Eq. \eqref{eq-monge-ampere-Lp}.  We state the main theorem as follows. 

\begin{thm} \label{thm priori est}
    Let $\theta\in (0, \frac{\pi}{2})$, and $p\neq n+1>1$.  
    \begin{enumerate}
        \item 
  If $p>n+1$, suppose that  $h$ is a solution to Eq. \eqref{eq-monge-ampere-Lp}, then there holds
    \begin{eqnarray}\label{thm-lower}
        \min\limits_{\C_{\theta}}h \geq c, 
    \end{eqnarray}
    and for any $\alpha \in (0, 1)$, 
    \begin{eqnarray}\label{thm-C2}
        \|h\|_{C^{3,\alpha}(\C_{\theta})}\leq C,
    \end{eqnarray}
   where the constants $c, C$ depend only on $n, p$ and $f$.
  \item  If $1<p<n+1$, suppose that $h$ is a positive, capillary even solution to Eq. \eqref{eq-monge-ampere-Lp}, then  \eqref{thm-lower} and \eqref{thm-C2} still hold. 
    \end{enumerate}
   Furthermore, if $n+1<p<n+2$, the rescaling solution 
   \begin{eqnarray*}
       \widetilde{h}\coloneqq \frac{h}{\min\limits_{\C_{\theta}}h}
   \end{eqnarray*}
satisfies
\begin{eqnarray}\label{re-est}
    \|\widetilde{h}\|_{C^{3,\alpha}(\C_{\theta})}\leq C',
\end{eqnarray}
where the constant $C'$ depends only on $n$ and $f$, but is independent of $p$.
\end{thm}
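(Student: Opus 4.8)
The strategy is the standard continuity-method-with-a-priori-estimates paradigm for Monge--Amp\`ere equations, adapted to the Robin boundary condition on $\C_\theta$. I would organize the proof of Theorem~\ref{thm priori est} into three blocks: (i) the $C^0$ estimate (positive lower and upper bounds for $h$), (ii) the $C^1$ estimate, and (iii) the $C^2$ estimate, after which the $C^{3,\alpha}$ bound follows from the Evans--Krylov theory for the oblique-derivative problem together with Schauder estimates. For the $C^0$ bound in the case $p > n+1$, I would pass to the capillary support function $u = h/\ell$, which by \cite[Lemma 2.4]{MWWX} satisfies a Monge--Amp\`ere-type equation with \emph{homogeneous Neumann} boundary condition $\n_\mu u = 0$ on $\p\C_\theta$; evaluating the equation at an interior maximum and minimum of $u$ and using $p - 1 > n$ gives $c \le u \le C$ by the maximum principle (the Hopf lemma rules out the extremum occurring on the boundary because of the Neumann condition). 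Care is needed because $\ell$ vanishes or is small near $\p\C_\theta$ for $\theta$ near $0$; but $\theta$ is fixed, so $\ell$ is bounded below by a positive constant and this is harmless. For $1 < p < n+1$ under the capillary-evenness hypothesis, I would instead argue via the capillary isoperimetric inequality of \cite{MWW24-IMRN}: integrating the equation gives control of $\int_{\C_\theta} f h^{p-1}\,d\sigma$, hence, combined with H\"older's inequality and the isoperimetric inequality relating volume and the capillary surface area measure, an upper bound on the outer radius (the max of $h$ over suitable directions) and a lower bound on the enclosed volume; the evenness then forces the ``center'' of the body to stay near the axis, converting these into genuine pointwise upper and lower bounds $c \le h \le C$.

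For the $C^1$ estimate when $p \ne n+1$: since $A = \n^2 h + h\sigma > 0$, the function $h$ is (capillary-)convex, and a convex support function with two-sided $C^0$ bounds automatically has a gradient bound --- $|\n h| \le C(\max h, \min h)$ --- because the graph of the corresponding convex body is pinched between two concentric spherical caps. This is exactly as in the classical closed case and requires essentially no new ideas beyond checking that the boundary condition $\n_\mu h = \cot\theta\, h$ is compatible (it is, as it encodes the fixed contact angle).

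\textbf{The $C^2$ estimate and the main obstacle.} The heart of the matter --- and the step I expect to be the main obstacle --- is the global second-derivative bound, which the paper handles following the Lions--Trudinger--Urbas scheme \cite{LTU}. The plan is: first, reduce the full Hessian bound to a bound on the \emph{double-normal} second derivative $h_{\mu\mu}$ on $\p\C_\theta$ (Lemma~\ref{lem c2 boundary}), by differentiating the equation and the boundary condition and applying the maximum principle to the largest eigenvalue of $A$; the strict convexity of $\p\C_\theta \subset \C_\theta$, which holds precisely because $\theta < \pi/2$, is what makes the tangential and mixed boundary second derivatives controllable and feeds the sign needed in the maximum principle. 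Second, bound $h_{\mu\mu}$ on the boundary by constructing an explicit barrier/test function (Lemma~\ref{c2 est}) exploiting the geodesic distance $d_N$ to $\p\C_\theta$; the Robin term $\cot\theta\, h$ with $\cot\theta > 0$ cooperates with the maximum principle here, whereas for $\theta > \pi/2$ the sign flips and the argument breaks --- this is the ``sign inconsistency'' flagged in the introduction. Throughout, one must track that all constants depend only on $n$, $p$, $\min f$, $\|f\|_{C^2}$; and for the last, $p$-uniform assertion \eqref{re-est} in the band $n+1 < p < n+2$, one re-runs the estimates for $\widetilde h = h/\min h$, noting that $\widetilde h$ solves the same equation with $f$ replaced by $(\min h)^{p-1} f$ times a bounded factor, and that all the constants in (i)--(iii) can be taken continuous (hence uniformly bounded) in $p$ on the compact interval $[n+1, n+2]$ --- the only delicate point being to verify that the $C^0$ bounds for $\widetilde h$ themselves do not degenerate as $p \downarrow n+1$, which one gets from the $p > n+1$ argument above together with the observation that the relevant inequalities are strict and stable. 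Once $\|h\|_{C^2} \le C$ (resp. $\|\widetilde h\|_{C^2} \le C'$) is in hand, Evans--Krylov for concave fully nonlinear oblique problems upgrades to $C^{2,\alpha}$, and differentiating the equation plus Schauder gives $C^{3,\alpha}$, completing the proof.
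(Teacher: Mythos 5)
Your plan is structurally aligned with the paper's proof: a $C^0$ estimate split between the $p>n+1$ case (capillary support function $u=h/\ell$, Neumann condition, maximum principle) and the $1<p<n+1$ case (capillary isoperimetric inequality plus H\"older), then a $C^1$ bound from convexity, and finally the Lions--Trudinger--Urbas scheme for the global $C^2$ bound (interior reduction to the double-normal boundary derivative, then a barrier construction near $\partial\C_\theta$), with Evans--Krylov and Schauder to close. Your $C^1$ argument is geometric (pinching between spherical caps) rather than the paper's analytic one (maximum principle applied to $|\nabla h|^2+h^2$); both are standard and harmless. One small inaccuracy: in the $C^0$ step you say the Hopf lemma "rules out" a boundary extremum of $u$. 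That is neither needed nor quite right; what is used is that the homogeneous Neumann condition, together with the tangential extremum conditions, forces $\nabla u=0$ and $\nabla^2 u\le 0$ also at a boundary extremum, so the same pointwise computation applies.

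There is, however, a genuine gap in your treatment of the $p$-uniform assertion \eqref{re-est}. You claim that the $p$-uniform $C^0$ bounds for $\widetilde h=h/\min h$ on $n+1<p<n+2$ follow from "the $p>n+1$ argument above" together with stability of the inequalities. That fails. The $p>n+1$ argument (Lemma~\ref{C0 pb}) controls $h^{p-n-1}$ between $p$-uniform constants $c_*\le h^{p-n-1}\le C_*$; this gives
$\max_{\C_\theta}\widetilde h\le (C_*/c_*)^{1/(p-n-1)}$,
which blows up as $p\downarrow n+1$ since $1/(p-n-1)\to\infty$ and generically $C_*/c_*>1$. The ingredient you are missing is the \emph{logarithmic gradient estimate} (Lemma~\ref{lem kC1} in the paper): for $n+1\le p\le n+2$,
$\max_{\C_\theta}|\nabla\log h|\le C$
with $C$ independent of $p$. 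This is proved via the auxiliary function $\Phi=\log|\nabla(\log h-\tfrac{\cot\theta}{2\theta}d_N^2)|^2+e^{\frac{s}{2\theta}d_N^2}$, where $d_N$ is the distance to the north pole of $\C_\theta$, chosen precisely to absorb the Robin boundary term; the restriction $\theta<\pi/2$ enters in the choice of $s$. Integrating $|\nabla\log h|\le C$ along a geodesic joining the minimum and maximum points of $h$ gives $\log(\max h/\min h)\le C\,\mathrm{diam}(\C_\theta)$, i.e.\ a $p$-uniform bound $1\le\widetilde h\le C'$, which is what actually powers \eqref{re-est}. Without this estimate, the claimed $p$-uniformity of the $C^0$ and hence all subsequent bounds on $\widetilde h$ is unsupported. (Your barrier for the double-normal estimate also mixes up the two distance functions: the paper uses $\zeta=e^{-d_{\partial\C_\theta}}-1$ there, while $d_N$ appears only in the logarithmic gradient estimate.)
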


In order to prove Theorem \ref{thm priori est}, we begin by establishing the $C^0$ estimate.

\subsection{\texorpdfstring{$C^{0}$}{} estimate} 

In this subsection, we establish positive lower and upper bounds for a solution to Eq.~\eqref{eq-monge-ampere-Lp} with $p>1$ and $p\neq n+1$. We first consider the case $p>n+1$.
\begin{lem}\label{C0 pb}
    Let $p > n+1$ and $\theta\in (0, {\pi})$. Suppose  $h$ is a positive solution to Eq. \eqref{eq-monge-ampere-Lp}. Then,  if $\theta \in (0, \frac{\pi}{2}]$, there holds
    \begin{eqnarray}\label{up-low}
       \frac{(1-\cos\theta)^{p-n-1}}{ (\sin\theta)^{2p-2} } \min\limits_{  \C_{\theta}}f^{-1}\leq h^{p-n-1}\leq   \frac{(\sin\theta)^{2(p-n-1)}}{(1-\cos\theta)^{p-1} }\max\limits_{ C_{\theta}}f^{-1},
    \end{eqnarray}
    and if  $\theta\in (\frac{\pi}{2}, \pi)$, we obtain 
    we obtain
    \begin{eqnarray}\label{c0 est-1}
        \frac{(\sin\theta)^{2(p-n-1)}}{ (1-\cos\theta)^{p-1}} \min\limits_{\C_{\theta}}f^{-1}\leq h^{p-n-1}\leq \frac{(1-\cos\theta)^{p-n-1}}{  (\sin\theta)^{2(p-1)}}\max\limits_{\C_{\theta}}f^{-1}.
    \end{eqnarray}
    \end{lem}

\begin{proof}
Consider the capillary support function (see, e.g.,  \cite[Eq. (1.6)]{MWW-AIM})
    \begin{eqnarray*}
        u \coloneqq \ell^{-1} h,
    \end{eqnarray*}
by \eqref{eq-monge-ampere-Lp}, we know that $u$ satisfies 
\begin{eqnarray}\label{eq-monge-ampere-Lp-u}
\left\{\begin{array}{rcll}\vspace{2mm}\displaystyle
		\det\left( \ell \n^2 u+\cos \theta ( \n u \otimes e^T +e^T\otimes \n u)+  u \sigma\right)&=&  f(u\ell)^{p-1},& \quad    \hbox{ in } \C_{\theta},\\
			\n_\mu u &=& 0, &\quad  \hbox{ on } \p \C_\theta,
		\end{array}\right.
	\end{eqnarray}  
where we used the simple fact that $\n_{\mu}\ell =\cot\theta \ell$ on $\partial \C_{\theta}$.

Suppose  $u$ attains the maximum value at some point, say $\xi_{0}\in \C_{\theta}$. If $\xi_{0}\in \C_{\theta}\setminus \partial\C_{\theta}$, then 
\begin{eqnarray}\label{deri12}
    \n u(\xi_{0})=0\quad \text{and}\quad \n^{2}u(\xi_{0})  \leq 0.
\end{eqnarray}
If $\xi_{0}\in \partial\C_{\theta}$, the boundary condition in \eqref{eq-monge-ampere-Lp-u} implies that \eqref{deri12} still holds. In the following, we perform computations at $\xi_{0}$. Substituting \eqref{deri12} into \eqref{eq-monge-ampere-Lp-u}, we obtain
\begin{eqnarray*}
   f(u\ell)^{p-1}= \det\left( \ell \n^2 u+\cos \theta ( \n u \otimes e^T +e^T\otimes \n u)+  u \sigma\right) \leq u^{n},
\end{eqnarray*}
which implies 
\begin{eqnarray*}
    u^{p-1-n} (\xi_0)\leq f^{-1}\ell^{1-p}\leq\frac{1}{\min\limits_{\C_{\theta}}f \cdot (1-\cos\theta)^{p-1}}.
\end{eqnarray*}
Therefore, for all $\xi\in \C_{\theta}$, 
\begin{eqnarray}
    h^{p-n-1}(\xi)&=&\left(u\ell\right)^{p-n-1}(\xi)
    \leq  u^{p-1-n}(\xi_{0})\cdot \left(\max \limits_{\xi\in \C_{\theta}}\ell(\xi)\right)^{p-n-1}\notag \\
    &\leq& \frac{(\sin\theta)^{2(p-n-1)}}{\min\limits_{C_{\theta}}f \cdot (1-\cos\theta)^{p-1}}.\label{h max}
\end{eqnarray}
Similarly, there holds
\begin{eqnarray*}
    h^{p-n-1}(\xi)\geq \frac{(1-\cos\theta)^{p-n-1}}{\max\limits_{ \C_{\theta}}f \cdot (\sin\theta)^{2(p-1)}}.
\end{eqnarray*}
We derive that \eqref{up-low} holds and \eqref{c0 est-1} follows from a similar argument. This completes the proof.    
\end{proof}

Next, we deal with the case $1<p<n+1$. By using the capillary isoperimetric inequality (see, e.g., \cite{Maggi} or \cite{MWW24-IMRN}) and H\"older's inequality, we derive a positive upper bound of any capillary even solution to Eq. \eqref{eq-monge-ampere-Lp}. Subsequently, by taking into account the special structure of Eq. \eqref{eq-monge-ampere-Lp}, we can obtain a positive lower bound of the volume, which also implies the positive lower bound of the capillary even solution to Eq. \eqref{eq-monge-ampere-Lp}.

\begin{lem}\label{C0 ps}
  Let $1<p<n+1$ and $\theta\in (0, \pi)$. Suppose $h$ is a positive, capillary even solution to Eq. \eqref{eq-monge-ampere-Lp}, then there exist positive constants $C_1,C_2$ depending only on $n, p, \min\limits_{\C_{\theta}}f$ and $\max\limits_{\C_{\theta}}f$ such that
    \begin{eqnarray}\label{bi-bound-h}
        C_1\leq h\leq C_2.
    \end{eqnarray}    
\end{lem}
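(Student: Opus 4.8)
The plan is to prove Lemma~\ref{C0 ps} in two stages: first the upper bound $h\le C_2$, then the lower bound $h\ge C_1$, the latter by first bounding from below the volume of the associated capillary convex body $\wh\S$.

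\textbf{Upper bound.} Let $R\coloneqq \max_{\C_\theta} u = \max_{\C_\theta}\ell^{-1}h$ be (up to the fixed factors $\ell\in[1-\cos\theta,\sin^2\theta]$) the outer radius of $\wh\S$; say $R = u(\xi_0)$. As in the proof of Lemma~\ref{C0 pb}, whether $\xi_0$ is interior or on $\p\C_\theta$, the Neumann condition for $u$ in \eqref{eq-monge-ampere-Lp-u} forces $\n u(\xi_0)=0$ and $\n^2 u(\xi_0)\le 0$. First I would use this to get a lower bound $h\ge cR$ on a definite-size cap around $\xi_0$: since $A=\n^2 h + h\s>0$ and $h$ is convex with a near-maximum at the corresponding point, standard convexity (a "width from one point" estimate on the spherical cap, cf. the arguments in \cite{GL2000, CW2006-AIM}) gives $h(\xi)\ge c_n R$ for all $\xi$ in a geodesic ball $B_{r_0}$ of fixed radius $r_0=r_0(n)$ around $\xi_0$; one must take $r_0$ small enough that $B_{r_0}\cap\C_\theta$ has measure bounded below, which is where $\theta\in(0,\pi)$ and, for boundary points $\xi_0$, capillary evenness (so that $\widehat{B_{r_0}}$ is also available) are used. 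Integrating Eq.~\eqref{eq-monge-ampere-Lp} against $1$, or more efficiently using the capillary $L_p$-mixed-volume identity from Section~\ref{sec2},
\[
  \int_{\C_\theta} f\,\ell\,h^{1-p}\,\det(\n^2 h+h\s)\cdot h \, d\s = \int_{\C_\theta} h\, dS^c_p \le (n+1)\,V_1^c(\wh\S,\wh\S)\ \text{-type quantity},
\]
is one route, but the cleanest is: $V^c_1(\wh\S,\wh\S)=\frac1{n+1}\int h\,dS^c = \frac{n+1}{?}|\wh\S|$ bounds the perimeter $|\wh\S\cap\p\RR^{n+1}_+|$-weighted volume, and the capillary isoperimetric inequality (\cite[Cor.~1.3, Eq.~(1.7)]{MWW24-IMRN}) bounds $|\wh\S|$ in terms of the capillary area $S^c(\wh\S)=\int dS^c$. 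So I would: (i) write $\int_{\C_\theta} h^{1-p}\,dS^c = \int f\ell\,d\s \le C(n,p,\max f)$ from the equation; (ii) on $B_{r_0}$, $h\ge c_nR$ gives $\int_{\C_\theta} h^{1-p}\,dS^c \ge \int_{B_{r_0}} h^{1-p}\,dS^c$; (iii) bound $\int_{B_{r_0}} dS^c$ below by a multiple of $\det A$ there and relate it to volume — but more simply, since $dS^c=\ell\det A\,d\s$ and $h\ge c_nR$ on $B_{r_0}$, we get $c_nR^{1-p}\int_{B_{r_0}}\ell\det A\,d\s \le C$, i.e. $\int_{B_{r_0}}\det A\,d\s\le C R^{p-1}$. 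On the other hand the volume of the "cone-like" piece of $\wh\S$ over $B_{r_0}$ is $\gtrsim R\cdot(\text{base area})\gtrsim R$ while also $|\wh\S|\le C\int_{\C_\theta}\det A\,d\s$; combining with the isoperimetric-type upper bound $|\wh\S| \le C(S^c)^{(n+1)/n}$ and $S^c=\int dS^c \le R^{p-1}\int_{\C_\theta} h^{1-p}dS^c\le CR^{p-1}$ (using $h\le CR$ everywhere from convexity, $h=u\ell\le R\sin^2\theta$) yields an inequality of the form $R\le |\wh\S|\le C R^{(p-1)(n+1)/n}$; since $1<p<n+1$ one checks $(p-1)(n+1)/n<n+1$... the precise exponent bookkeeping needs care, and the genuinely useful inequality is $c R^{\alpha}\le C$ with $\alpha>0$, giving $R\le C_2'$, hence $h\le C_2$.

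\textbf{Lower bound.} Once $h$ (hence $u$) is bounded above, I would derive a \emph{lower} bound on the volume $|\wh\S|$: integrating Eq.~\eqref{eq-monge-ampere-Lp}, $\int_{\C_\theta}\ell\det A\,d\s = \int_{\C_\theta} f\ell h^{p-1}\,d\s \le \max f\cdot(\sin^2\theta)\cdot C_2^{p-1}\cdot|\C_\theta|$, which bounds $S^c$ above; but I want the volume from below, so instead use the other half of the equation: $\int_{\C_\theta} h\,dS^c = \int f\ell h^p\,d\s \ge \min f\cdot(1-\cos\theta)\int h^p d\s$, and $\int h\,dS^c = (n+1)V^c_1(\wh\S,\wh\S)$ which is comparable to $|\wh\S|$ via the capillary analogue of $(n+1)V(K,K,\dots)=\text{perimeter-type}$; more directly, $|\wh\S|\ge c_n\,(\min_{\C_\theta} h)^{n+1}$ is false in general, but $|\wh\S|\ge c\,\ell_{\min}^{?}\cdot$(inradius)$^{n+1}$, so the real task is to bound the inradius, equivalently $\min h$, below. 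Here capillary evenness enters decisively: the body $\wh\S$ is symmetric under $\xi\mapsto\widehat\xi$, so its "center" lies over the axis, and a symmetric convex body with bounded outer radius $R\le C_2$ and volume bounded below by $v_0>0$ has inradius bounded below by $c(v_0,C_2,n)$; thus it suffices to show $|\wh\S|\ge v_0>0$. For that, suppose $|\wh\S|\to 0$; then by the capillary isoperimetric inequality $S^c\to 0$, so $\int_{\C_\theta}\ell\det A\,d\s\to 0$, hence $\det A\to 0$ in $L^1$, which by the equation means $\int f\ell h^{p-1}\to 0$, forcing $h\to 0$ somewhere — but combined with boundedness and convexity and the evenness/Neumann structure one shows $h\to 0$ uniformly, contradicting $\int f\ell h^{p-1}=\int\ell\det A$ vs. the degenerate direction giving a lower bound on $\int h^{p-1}$ via the fixed direction $e$. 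The clean way: use H\"older on $\int_{\C_\theta} f\ell\,d\s = \int h^{1-p}\,dS^c$ — writing $\int h^{1-p}dS^c \le (\int h\,dS^c)^{\frac{1-p}{?}}\cdots$ no; rather $\big(\int dS^c\big) = \int h^{p-1}h^{1-p}dS^c \le (\max h)^{p-1}\int h^{1-p}dS^c = C_2^{p-1}\int f\ell\,d\s$, fine for the upper side, and for the lower side $\int f\ell\,d\s = \int h^{1-p}\,dS^c \le (\min h)^{1-p}S^c$ when $p>1$ (so $1-p<0$, $h^{1-p}\le(\min h)^{1-p}$), giving $S^c \ge (\min h)^{p-1}\int f\ell\,d\s \ge (\min h)^{p-1}c_0$; and the capillary isoperimetric inequality plus $S^c\le C$ (from the already-proven upper bound on $h$) then bounds $(\min h)^{p-1}$ above — wrong direction again. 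So the lower bound for $\min h$ genuinely comes from a lower volume bound: $|\wh\S|\ge c$, proved by testing the equation against a fixed positive function and using that $\det A$ cannot concentrate/vanish for a capillary-even convex body of controlled size; I would run this as a compactness/Blaschke-selection argument on $\K^\circ_\theta$ (capillary-even bodies with support functions in a fixed $C^0$-ball form a compact family, and $|\wh\S|=0$ forces degeneration to a lower-dimensional set, incompatible with the equation's right-hand side being bounded below in an integrated sense).

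\textbf{Main obstacle.} The delicate point is the lower volume bound, i.e.\ ruling out degeneration of the capillary-even convex body — the boundary $\p\C_\theta$ and the Robin condition mean the usual "a convex body containing the origin with bounded $\int h^{1-p}dS$ cannot be thin" argument (Chou--Wang, Böröczky--Lutwak--Yang--Zhang) must be reworked so that the symmetry used is the capillary reflection $\xi\mapsto\widehat\xi$ rather than the full antipodal map, and one must check that the relevant subspace-concentration phenomenon is obstructed for $1<p<n+1$ exactly as in the closed case. I expect the bookkeeping of exponents in the isoperimetric/H\"older chain, and the careful placement of the fixed-size geodesic ball $B_{r_0}$ relative to $\p\C_\theta$, to be the parts requiring the most care; the rest is a routine adaptation of \cite{CW2006-AIM, GL2000} to the capillary setting.
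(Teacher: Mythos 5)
Your overall strategy (bound the outer radius above, bound the volume below, then conclude inradius$/\min h$ is bounded below) is the correct skeleton and matches the paper's, but both halves contain concrete gaps.

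\textbf{Upper bound: the exponent does not close for the full range $1<p<n+1$.} Your chain
\[
  cR \le |\wh\S| \le C\,(S^c)^{\frac{n+1}{n}} \le C\,R^{\frac{(p-1)(n+1)}{n}}
\]
(using $S^c=\int h^{p-1}\cdot h^{1-p}\,dS^c\le (\max h)^{p-1}\int f\ell\,d\s\lesssim R^{p-1}$) requires $\frac{(p-1)(n+1)}{n}<1$ to solve for $R$, i.e.\ $p<\frac{2n+1}{n+1}$, which is far short of $p<n+1$ once $n\geq 2$. (Your side remark that "$(p-1)(n+1)/n<n+1$" is true but irrelevant; you need $<1$, not $<n+1$.) The paper avoids this by never crudely replacing $h$ with $R$: it starts from the exact identity $(n+1)|\wh\S|=\int_{\C_\theta} fh^p\,d\s$, applies the isoperimetric inequality to get $\int fh^p\le C\big(\int\det A\big)^{\frac{n+1}{n}}=C\big(\int f h^{p-1}\big)^{\frac{n+1}{n}}$, and then uses H\"older in the form $\int fh^{p-1}\le\big(\int fh^p\big)^{\frac{p-1}{p}}\big(\int f\big)^{\frac1p}$. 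The resulting exponent on $\int fh^p$ is $\frac{(p-1)(n+1)}{np}$, and the extra $\frac1p$ is exactly what makes $\frac{(p-1)(n+1)}{np}<1\iff p<n+1$. Separately, your intermediate step $|\wh\S|\gtrsim R$ (volume bounded below by a cone over a fixed cap) is not justified as written: a large support function on a fixed ball of directions does not by itself exclude a thin body. The paper instead bounds $\int h^p\gtrsim R_0^p$ using the single touching point $X_0\in\Sigma\cap R_0\C_\theta$ and then feeds that into the displayed identity, which is both correct and sharper.

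\textbf{Lower bound: you identify the right reduction but not the mechanism.} You correctly observe that, given $R_0\le C_2$ and a volume lower bound $|\wh\S|\ge v_0>0$ together with capillary evenness, the inradius (hence $\min h$) is bounded below. But your proposed route to $|\wh\S|\ge v_0$ --- a compactness/Blaschke-selection or concentration argument --- is left entirely schematic and is not what the paper does. The paper obtains the volume lower bound in two elementary steps. First, the same maximum-principle computation as in Lemma \ref{C0 pb} (evaluate the equation at the max of $u=\ell^{-1}h$; for $1<p<n+1$ the exponent $n+1-p>0$ flips the inequality) gives $\max_{\C_\theta}h\ge c>0$ directly from the equation, with no compactness. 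Second, the geometric relation $R_0=\frac{\max h}{1+\cos\theta\langle\nu(X_0),e\rangle}\ge\frac12\max h$ from \cite[Remark 2.3]{MWW-AIM} bounds $R_0$ below, and then $(n+1)|\wh\S|=\int fh^p\ge\min f\cdot R_0^p\int_{\C_\theta}\big(\max\{0,\langle\xi-\cos\theta e,\widehat X_0\rangle\}\big)^p d\s$ bounds the volume below. This is the step you searched for but did not find; without it the lower bound is not established.

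\end{document}
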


 \begin{proof}
 First, we derive the positive upper bound of $h$. Since $h$ is a capillary even function on $\C_{\theta}$, then $h$ satisfies $$\int_{\C_{\theta}}\<\xi, E_{\alpha}\>hd\sigma =0, \quad {\text{for~}}~ 1\leq \alpha\leq n.$$
 From \cite[Lemma~3.1]{MWW-AIM}, we have that the origin lies in the interior of $\widehat{\partial\S}$. Consequently,  there exists some	$R>0$ such that 
		\begin{eqnarray}\label{enclosed}
			\widehat	\S\subset \widehat{\C_{\theta,R}},
		\end{eqnarray}where   $$\C_{\theta,R}\coloneqq  \left\{\varsigma \in \ol{\RR^{n+1}_+} \big| ~~|\varsigma-\cos\theta R e|=R \right \}.$$
   Let $R_{0}$ denote the smallest positive constant $R$ satisfying \eqref{enclosed}.  It is clear that there exists $X_0\in \Sigma \cap R_0\C_{\theta } $.  Set $\widehat {X}_0\coloneqq \frac{X_0 }{R_0}\in \C_\theta$,  for any $\xi \in \C_\theta$, we have 
		\begin{eqnarray*}
			h(\xi) &= &\sup_{Y\in \S}\<\xi-\cos\theta e,Y\>\notag  \geq \max \left\{0, \<\xi-\cos\theta e,X_0\>  \right\} \notag \\&=&  \max \left\{0, R_0\<\xi-\cos\theta e,\widehat {X}_0\> \right\}.
   \end{eqnarray*}
   Integrating over $\xi\in\C_\theta$ yields 
   \begin{eqnarray}\label{hp}
       \int_{\C_{\theta}}h^{p}d\sigma\geq R_{0}^{p}\int_{\C_{\theta}}\left(\max \left\{0, \<\xi-\cos\theta e, \widehat{X}_{0}\> \right\}\right)^{p}d\sigma.
   \end{eqnarray}
   
   On the other hand, by multiplying $h$ on both sides of Eq. \eqref{eq-monge-ampere-Lp} and integrating over $\C_{\theta}$, we have
\begin{eqnarray}
  \int_{\C_{\theta}}f h ^{p}d\sigma&=&\int_{\C_{\theta}} h\det(\n^{2}h+h\sigma)d\sigma=\int_{\S} \<X, \nu\>dA \notag \\
  &=&(n+1)|\widehat{\S}|.\label{volume}
\end{eqnarray}
From the capillary isoperimetric inequality (see e.g., \cite[Theorem 19.21]{Maggi} or \cite[Eq. (1.7)]{MWW24-IMRN}), we obtain
		\begin{eqnarray*}
			c(n,\theta)	|\widehat{\S}|^{\frac{n}{n+1}}&\leq& |\S|-\cos\theta|\widehat{\p\S}|=\int_\S \left(1+\cos\theta \<\nu,e\>\right)dA\notag 
			\\&=&\int_{\C_\theta} \left(\sin^2\theta+\cos\theta\<\xi,e\>\right)\det(\n^{2}h+h\sigma) d \s,
		\end{eqnarray*}
        where $c(n,\theta)\coloneqq \frac{|\C_\theta|-\cos\theta |\widehat{\p \C_\theta}|}{|\widehat{\C_\theta}|^{\frac{n}{n+1}}}.$ 
\eqref{volume} and  H\" older's inequality together yield
  \begin{eqnarray*}
      \int_{\C_{\theta}}f h^{p}d\sigma &\leq& C(n,\theta)\left(\int_{\C_{\theta}}\det(\n^{2}h+h\sigma)d\sigma\right)^{\frac{n+1}{n}}\\
      &=&C(n,\theta)\left(\int_{\C_{\theta}}h^{p-1}fd\sigma\right)^{\frac{n+1}{n}}\\
      &\leq& C(n,\theta)\left(\int_{\C_{\theta}}fh^{p}d\sigma\right)^{\frac{p-1}{p}\frac{n+1}{n}}\left(\int_{\C_{\theta}}fd\sigma\right)^{\frac{1}{p}\frac{n+1}{n}},
  \end{eqnarray*}
 where $C(n,\theta)\coloneqq (n+1)\left(\frac 2 {c(n,\theta)} \right)^{\frac{n+1}{n}}$. Since $p<n+1$, we conclude that 
  \begin{eqnarray*}
      \int_{\C_{\theta}}fh^{p}d \s\leq C(n,\theta)^{\frac{np}{n+1-p}}\left(\int_{\C_{\theta}}fd\sigma\right)^{\frac{n+1}{n+1-p}},  \end{eqnarray*}
and further,
\begin{eqnarray*}
    \int_{\C_{\theta}}h^{p}d\sigma\leq \frac{C(n,\theta)^{\frac{np}{n+1-p}}}{\min\limits_{\C_{\theta}}f}\left(\int_{\C_{\theta}}fd\sigma\right)^{\frac{n+1}{n+1-p}}.
\end{eqnarray*}
Combining  with \eqref{hp}, we get
\begin{eqnarray*}
    R_{0}\leq \left(\int_{\C_{\theta}} \left(\max \left\{0, \<\xi-\cos\theta e, \widehat{X}_{0}\> \right\} \right)^{p}d\sigma\right)^{-\frac{1}{p}}\left(\frac{C(n,\theta)^{\frac{np}{n+1-p}}}{\min\limits_{\C_\theta}f} \left(\int_{\C_{\theta}}f d\sigma \right)^{\frac{n+1}{n+1-p}}\right)^{\frac{1}{p}},
\end{eqnarray*}
which implies the positive upper bound of $h$ in \eqref{bi-bound-h}.

Next, we establish the positive lower bound of $h$ in \eqref{bi-bound-h}. By adopting the argument as in  Lemma \ref{C0 pb}, from \eqref{h max} we obtain
\begin{eqnarray*}
    \max_{\C_\theta} h\geq \left(\min\limits_{\C_{\theta}}f \cdot (1-\cos\theta)^{p-1}\sin^{2(n+1-p)}\theta\right)^{\frac{1}{n+1-p}}.
\end{eqnarray*}
From \cite[Remark 2.3]{MWW-AIM}, we have
\begin{eqnarray}\label{lower of R0}
    R_{0}&=&\frac{\max\limits_{\C_\theta} h}{1+\cos\theta\<\nu(X_{0}), e\>}
    \geq \frac{1}{2}\max\limits_{\C_\theta} h \notag \\
    &\geq& \frac{1}{2}\left(\min\limits_{\C_{\theta}}f \cdot (1-\cos\theta)^{p-1}\sin^{2(n+1-p)}\theta\right)^{\frac{1}{n+1-p}}.
\end{eqnarray}
Together with \eqref{hp} and \eqref{volume},  it yields
\begin{eqnarray*}
    (n+1)|\widehat{\S}|&=&\int_{\C_{\theta}}f h^{p}d\sigma\geq \min\limits_{\C_{\theta}}f \int_{\C_{\theta}}h^{p}d\sigma\notag \\
    &\geq&\min\limits_{\C_{\theta}}f\cdot R_{0}^{p}\int_{\C_{\theta}}\left(\max \left\{0, \<\xi-\cos\theta e, \widehat{X}_{0}\> \right\}\right)^{p}d\sigma.
\end{eqnarray*}
Taking into account \eqref{lower of R0}, we derive that the volume $|\widehat{\S}|$ has a uniform positive lower bound. Additionally,  since there is a uniformly upper bound of $R_{0}$, it follows that the inner radius must have a positive lower bound, which in turn yields the lower bound of \eqref{bi-bound-h}. This completes the proof. 
    
\end{proof}

\subsection{\texorpdfstring{$C^{1}$}{} estimate}\label{sec3.2}
In this subsection, we derive the $C^{1}$ estimate, which follows from convexity and is independent of the specific equation. For the reader’s convenience, 
we include the proof here.
\begin{lem}\label{C1}
    Let $p>1$ and $\theta\in (0, \pi)$. Suppose $h$ is a positive solution to Eq. \eqref{eq-monge-ampere-Lp}, then there holds
    \begin{eqnarray}\label{gradient ps}
       \max\limits_{\C_{\theta}} |\n h|\leq (1+\cot^{2}\theta)^{\frac{1}{2}}\|h\|_{C^{0}(\C_{\theta})}.
    \end{eqnarray}
   
\end{lem}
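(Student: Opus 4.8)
The plan is to bound $|\n h|$ pointwise on $\C_\theta$ by relating the gradient of the support function to the position of the corresponding boundary point of $\wh\S$. Recall from \eqref{support} that $h(\xi)=\langle \tilde\nu^{-1}(\xi),\xi-\cos\theta\,e\rangle$, so the point $X=\tilde\nu^{-1}(\xi)\in\S$ can be recovered from $h$ and its first derivatives along $\C_\theta$. Concretely, writing the decomposition of $X$ into its component along $\nu(X)=\xi-\cos\theta\,e$ and its tangential component, one gets an expression of the shape $X=h\,\nu+\n_{\C_\theta}h$ (up to the standard correction terms coming from the fact that $\C_\theta\subset\SS^n_{\text{translated}}$ rather than $\SS^n$ itself); in particular the tangential part of $X$ at $\xi$ is exactly $\n h(\xi)$. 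Hence $|\n h(\xi)|\le |X|$, and the whole game is to bound $|X|$ in terms of $\|h\|_{C^0}$.

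First I would make the identification of $\n h$ with the tangential part of $X=\tilde\nu^{-1}(\xi)$ precise, using the computation already recorded in \cite[Section 2]{MWWX} or the analogous one in \cite[Section 2]{MWW-AIM}; this is the routine differentiation of \eqref{support}. Next, since $X\in\S\subset\ol{\RR^{n+1}_+}$ and $\nu(X)=\xi-\cos\theta\,e$ is the unit outward normal, I would decompose $X=h(\xi)\,\nu(X)+T$ where $T$ is tangent to $\S$ at $X$, so that $|X|^2=h(\xi)^2+|T|^2$ and $|\n h(\xi)|=|T|$. It then remains to control $|T|$; the point is that $T$ lies in the tangent space of $\S$ at a boundary-or-interior point, and convexity together with the contact angle condition forces $X$ (equivalently $T$) to stay in a bounded region determined by $\|h\|_{C^0}$. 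The cleanest way: for $\xi$ ranging over $\C_\theta$, the supremum of $\langle \eta,X\rangle$ over admissible directions $\eta$ is at most $\|h\|_{C^0}$ by definition of the support function, and by choosing $\eta$ appropriately (a rotation of $\xi-\cos\theta\,e$ in the plane spanned by $\nu$ and $T/|T|$) one extracts a factor involving $(1+\cot^2\theta)^{1/2}$ from the geometry of $\C_\theta$ near $\p\C_\theta$, i.e.\ from the angle constraint $\n_\mu h=\cot\theta\,h$ on the boundary.

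The main obstacle, and the only place where the constant $(1+\cot^2\theta)^{1/2}$ rather than $1$ enters, is the boundary behaviour: for $\xi$ in the interior of $\C_\theta$ one would get the bare bound $|\n h|\le\|h\|_{C^0}$ from convexity (as in the closed case on $\SS^n$), but near $\p\C_\theta$ the support function is only defined on the cap, and the Robin condition $\n_\mu h=\cot\theta\,h$ contributes a normal-direction term whose size is $|\cot\theta|\,\|h\|_{C^0}$. Combining the tangential-to-$\p\C_\theta$ part (bounded by $\|h\|_{C^0}$) with this normal part (bounded by $|\cot\theta|\,\|h\|_{C^0}$) via the Pythagorean relation yields the stated factor $(1+\cot^2\theta)^{1/2}$. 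I would therefore split the argument into the interior estimate (pure convexity on the spherical cap, giving $\|h\|_{C^0}$) and the boundary estimate (using the Robin condition to control the missing normal component), and then observe that the worst case is the boundary one. Since this lemma is explicitly stated to follow from convexity alone and to be independent of the PDE, no use of the Monge--Amp\`ere structure or of $p$ is needed; only the convexity of $h$ on $\C_\theta$ and the boundary condition in \eqref{eq-monge-ampere-Lp} are invoked.
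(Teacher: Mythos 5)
Your geometric framework — identify $\nabla h(\xi)$ with the tangential part of $X=\tilde\nu^{-1}(\xi)$, so that $|X|^2=h^2+|\nabla h|^2$, and then bound $|X|$ — is in principle viable, but as written the proposal diverges from the paper's proof and has real gaps. The paper does not argue pointwise via $|X|$ at all: it applies the maximum principle to $P:=|\nabla h|^2+h^2$. At an interior maximum the first-order condition $0=\nabla_i P=2h_k h_{ki}+2hh_i=2h_k(h_{ki}+h\delta_{ki})$ combined with $A=\nabla^2 h+h\sigma>0$ forces $\nabla h(\xi_0)=0$, hence $P(\xi_0)=h(\xi_0)^2$; at a boundary maximum one uses the structural fact $h_{\alpha\mu}=0$ on $\partial\C_\theta$ (from \cite[Prop.\ 2.8]{MWW-AIM}) together with the tangential first-order condition to conclude $h_\alpha(\xi_0)=0$ for all tangential $\alpha$, so $|\nabla h|^2(\xi_0)=h_\mu^2=\cot^2\theta\,h^2$ by the Robin condition, giving $P(\xi_0)=(1+\cot^2\theta)h(\xi_0)^2$.

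The problems with your sketch are concrete. First, your interior claim that ``one would get the bare bound $|\nabla h|\le\|h\|_{C^0}$ from convexity as in the closed case'' is not true pointwise: in the closed case that bound comes from $|X|\le\max_{\SS^n}h$, which requires the support function to be defined on all of $\SS^n$ so that $X/|X|$ is an admissible direction. Here $h$ is only defined on $\C_\theta$, i.e.\ on normals $\nu$ with $\langle\nu,e\rangle\in[-1,-\cos\theta]$, and for $X$ near $\partial\S$ (or pointing nearly horizontally) the direction $X/|X|$ lies outside this cone, so $|X|\le\max_{\C_\theta}h$ fails and the constant $1/\sin\theta$ already appears for \emph{interior} points, not just boundary ones. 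Second, the ``whole game'' — the bound $|X|\le(1+\cot^2\theta)^{1/2}\|h\|_{C^0}$ — is precisely what you do not carry out: one must show that for any $X\in\S$ the best admissible $\eta$ satisfies $\langle X,\eta\rangle\ge|X|\sin\theta$ (worst case $X$ horizontal, $\eta$ at angle $\tfrac\pi2-\theta$), and the vague ``choosing $\eta$ appropriately'' does not do this. Third, your Pythagorean decomposition at the boundary requires the tangential-to-$\partial\C_\theta$ part of $\nabla h$ to be bounded by $\|h\|_{C^0}$; you neither justify this nor observe the much stronger fact the paper uses, namely that at the maximum point of $P$ this tangential part is exactly zero. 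Until these three points are filled in, the argument does not establish the lemma.

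If you do want to pursue the geometric route, the crucial missing lemma is: for every $X\in\S$, $|X|\le\|h\|_{C^0(\C_\theta)}/\sin\theta$, proved by projecting $X/|X|$ into the admissible cone $\{\nu\in\SS^n:\langle\nu,e\rangle\le-\cos\theta\}$ and using $\langle X,\eta\rangle\le h$ for admissible $\eta$. That would yield $|\nabla h|\le|X|\le(1+\cot^2\theta)^{1/2}\|h\|_{C^0}$ in one stroke, without any interior/boundary case split — a cleaner formulation of your idea, and a genuinely different proof from the paper's. What the paper's maximum-principle proof buys in return is that it uses only the convexity of $A$ and the boundary identity $h_{\alpha\mu}=0$, with no reference to the geometry of the body $\wh\S$ or the inverse Gauss map.
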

\begin{proof}
    Consider the following  function
    \begin{eqnarray*}
        P\coloneqq |\n h|^{2}+h^{2}.
    \end{eqnarray*}
Suppose that $P$ attains its maximum value at some point, say $\xi_{0}\in \C_{\theta}$. If $\xi_{0}\in \C_{\theta}\setminus \partial\C_{\theta}$, by the maximal condition, we have
\begin{eqnarray*}
    0=\n_{e_i}P=2h_{k}h_{ki}+2hh_{i},\quad \text{for}~1\leq i\leq n.
\end{eqnarray*}
Together with the convexity of $h$, i.e., $h_{ij}+h\delta_{ij}$ is positive definite,  it follows $\n h(\xi_{0})=0$ and we conclude that \eqref{gradient ps} holds.

If $\xi_{0}\in \partial\C_{\theta}$, we choose an orthonormal frame $\{e_{i}\}_{i=1}^{n}$ around $\xi_{0}\in \partial\C_{\theta}$ such that $e_{n}=\mu$. From \cite[Proposition 2.8]{MWW-AIM}, we know that
\begin{eqnarray}\label{h_1n}
    h_{\alpha n}=0, ~~\text{for~any}~~ 1\leq \alpha\leq  n-1.
\end{eqnarray}
Using the maximal condition again, we have 
\begin{eqnarray*}
   0= \n_{e_{\alpha}} P=2\sum\limits_{i=1}^{n}h_{i}h_{i\alpha}+2hh_{\alpha},
\end{eqnarray*}
which implies 
\begin{eqnarray}\label{tangent}
    h_\alpha(\xi_0)=0.
\end{eqnarray}
Together with  \eqref{tangent} and $h_{n}=\cot\theta h$ on $\partial\C_{\theta}$, we get
\begin{eqnarray*}
    |\n h|^{2}(\xi_0)\leq (|\n h|^{2}+|h|^{2})(\xi_{0})=(|h_{n}|^{2}+h^{2})(\xi_{0})=(1+\cot^{2}\theta)h^{2}(\xi_{0}).
\end{eqnarray*}
Thus, the assertion \eqref{gradient ps} follows.
\end{proof}

We now move on to establish the logarithmic gradient estimate for solutions to Eq. \eqref{eq-monge-ampere-Lp}. This estimate is crucial for addressing the case $p = n+1$. Similar type estimate were established in \cite[Section 2]{GL2000}, \cite[Section 3]{HMS2004} and \cite[Section 3]{MWW-Quotient}.  Our choice of test functions is inspired by the ideas developed in \cite{MQ, MX}, in the context of boundary value problems.

\begin{lem}\label{lem kC1}
 Let $p\geq n+1$ and $\theta \in (0, \frac{\pi}{2})$. If $h$ is a positive solution to Eq. \eqref{eq-monge-ampere-Lp}, then there exists a  constant $C$ depending on $n, p, \min\limits_{\C_{\theta}}f$ and $\|f\|_{C^{1}(\C_{\theta})}$, such that 
\begin{eqnarray}\label{log-C1-est}
\max\limits_{\C_{\theta}} |\n \log h|\leq C.
  \end{eqnarray}
Furthermore, if $n+1\leq p\leq n+2$, the constant $C$ in \eqref{log-C1-est} is independent of $p$.
  \end{lem}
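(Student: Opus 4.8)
The plan is to argue by contradiction against the scaling structure of the equation, exploiting the homogeneity in $h$. Suppose \eqref{log-C1-est} fails; then there is a sequence of positive solutions $h_k$ to \eqref{eq-monge-ampere-Lp} (with exponents $p_k \in [n+1, n+2]$, say, converging to some $p_\infty$, when we want the $p$-independence claim) with $\max_{\C_\theta} |\n \log h_k| \to \infty$. Normalize by setting $\widetilde h_k \coloneqq h_k / \min_{\C_\theta} h_k$, so that $\min_{\C_\theta} \widetilde h_k = 1$; because \eqref{eq-monge-ampere-Lp} is invariant under $h \mapsto \lambda h$ only when $p = n+1$, one first checks that $\widetilde h_k$ solves $\det(\n^2 \widetilde h_k + \widetilde h_k \s) = f_k \, \widetilde h_k^{\,p_k - 1}$ with $f_k \coloneqq f\,(\min_{\C_\theta} h_k)^{p_k - n - 1}$, together with the same Robin condition; the key point is that the $C^1$ bound of Lemma~\ref{C1} gives $\max |\n \widetilde h_k| \le (1+\cot^2\theta)^{1/2} \max \widetilde h_k$, so $|\n \log \widetilde h_k| = |\n\log h_k|$ blows up only if $\max_{\C_\theta}\widetilde h_k \to \infty$.

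The heart of the proof is a differential-inequality argument for the test function
\[
  \Phi \coloneqq \log\bigl| \n w \bigr|^2 + e^{\frac{s}{2\theta} d_N^2}, \qquad w \coloneqq \log h - \frac{\cot\theta}{2\theta}\, d_N^2,
\]
as announced in \eqref{Phi}, where $d_N$ is the geodesic distance to $\p\C_\theta$ and $s>0$ is a constant to be fixed. The modification $w$ is designed so that the gradient of $w$ satisfies a \emph{homogeneous} Neumann condition on $\p\C_\theta$: since $\n_\mu \log h = \cot\theta$ there and $\n_\mu(\tfrac{\cot\theta}{2\theta} d_N^2) = \cot\theta$ (because $|\n d_N| = 1$ and $d_N = 0$ on the boundary), we get $\n_\mu w = 0$. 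First I would compute, at an interior maximum point $\xi_0$ of $\Phi$, the linearized operator $L \coloneqq A^{ij}\n^2_{ij}$ (where $A^{ij}$ is the cofactor matrix of $A = \n^2 h + h\s$) applied to $\log|\n w|^2$: differentiating the equation $\log\det A = \log f + (p-1)\log h$ twice and using the concavity of $\log\det$, one obtains a lower bound $L(\log|\n w|^2) \ge -C(1 + |\n w|^2)$ modulo terms absorbed by $L(e^{\frac{s}{2\theta}d_N^2})$ — here the sign condition $\theta < \tfrac\pi2$ enters to control the curvature terms coming from $d_N^2$ and to keep the second exponential term subsolution-like. Then one handles the boundary case: if $\xi_0 \in \p\C_\theta$, the Hopf-type computation using $\n_\mu w = 0$, $h_{\alpha n} = 0$ (from \cite[Proposition 2.8]{MWW-AIM}), and the strict convexity of $\p\C_\theta \subset \C_\theta$ forces $\n_\mu \Phi(\xi_0) < 0$ unless $|\n w|(\xi_0)$ is already bounded, contradicting that $\xi_0$ is a boundary maximum. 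Combining both cases gives $\max_{\C_\theta}|\n w|^2 \le C$, hence $\max|\n\log h|\le C$ after reinstating the bounded term $\tfrac{\cot\theta}{2\theta}\n(d_N^2)$.

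For the $p$-independence when $p \in [n+1, n+2]$, I would track the constants carefully: the only place $p$ enters the interior inequality is through the factor $(p-1)$ multiplying $\n^2 \log h$-type terms and through $\|\n\log f\|$ in the right-hand side, and both are controlled by a bound depending only on $n$, $\min_{\C_\theta} f$, $\|f\|_{C^1}$ once $p$ ranges over the compact interval $[n+1, n+2]$ — the rescaled data $f_k$ has $\min f_k \ge \min f \cdot (\min h_k)^{p_k - n - 1}$ bounded below and $\|f_k\|_{C^1}$ bounded above because $p_k - n - 1 \in [0,1]$ and $\min_{\C_\theta} h_k$ is bounded between the two-sided bounds of Lemma~\ref{C0 pb} (applied with the rescaled exponent, or Lemma~\ref{C0 ps} in the even case). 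The main obstacle I anticipate is the boundary term: extracting the correct sign from $\n_\mu\Phi$ at a boundary maximum requires precise control of how $d_N^2$ interacts with the Robin condition through the second fundamental form of $\p\C_\theta$ inside $\C_\theta$, and this is exactly where the constant $s$ in \eqref{Phi} must be chosen large enough (depending on $\theta$, $n$) to dominate the "bad" boundary contributions — the delicate balance between the logarithmic interior term and the exponential boundary term is the technical crux, and getting it to close uniformly in $p$ is what makes the test function \eqref{Phi} rather than a naive one necessary.
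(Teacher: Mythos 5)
Your overall strategy is the right one and matches the paper's: introduce $w$ to turn the Robin condition into a homogeneous Neumann condition, form the test function $\Phi=\log|\nabla w|^2+e^{sd}$, and run a maximum-principle argument with separate boundary and interior cases. However, there are several concrete errors that would prevent the argument from closing as written.

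The most serious is the misidentification of $d_N$. You take $d_N$ to be the geodesic distance to $\partial\C_\theta$, and then assert $\nabla_\mu\bigl(\tfrac{\cot\theta}{2\theta}d_N^2\bigr)=\cot\theta$ on the boundary ``because $|\nabla d_N|=1$ and $d_N=0$ on the boundary.'' But if $d_N=0$ on $\partial\C_\theta$, then $\nabla_\mu(d_N^2)=2d_N\nabla_\mu d_N=0$ there, so the modification contributes nothing and $w$ does \emph{not} satisfy $\nabla_\mu w=0$; your computation is self-contradictory. In the paper, $d_N$ is the geodesic distance to the interior ``north pole'' $N=(0,\dots,0,1-\cos\theta)$, for which $d_N\equiv\theta$ and $\nabla_\mu d_N=1$ on $\partial\C_\theta$, giving $\nabla_\mu\bigl(\tfrac{1}{2\theta}d_N^2\bigr)=1$ and hence $\nabla_\mu w=\cot\theta-\cot\theta=0$. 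Without this, the Hopf-type boundary case collapses, since $w_n\neq 0$ and the clean cancellation $\sum_\alpha w_\alpha w_{\alpha n}/|\nabla w|^2=-\cot\theta$ is lost.

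Second, the direction of the choice of $s$ is backwards. You suggest taking $s$ large to ``dominate the bad boundary contributions,'' but the boundary case needs $\nabla_n\Phi=-\cot\theta+s\,e^{sd}<0$, which forces $s$ small (specifically $s\,e^{\theta s/2}<\cot\theta$), and the interior case likewise needs $s$ small so that $\varphi'd_{ij}\ge c_0 s e^{sd}\sigma$ dominates the negative term $(\varphi''-(\varphi')^2)d_id_j$, i.e.\ $\tfrac{c_0}{2}>2s\,e^{\theta s/2}$. Large $s$ would destroy both inequalities.

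Third, the interior estimate as you state it, $L(\log|\nabla w|^2)\gtrsim -C(1+|\nabla w|^2)$, is both imprecise and the wrong shape to conclude. The paper's computation, after using the maximum condition $\Phi_i=0$ to express $B_{ii}$ in terms of $w_i$ and a nontrivial lower bound for $2\sum_i B_{ii}+4G^{ij}v_iv_jv_kw_k+2G^{ij}v_iv_j|\nabla v|^2-8|\nabla v|^2$, arrives at $0\ge \tfrac12 c_0 s e^{sd}\sum_i G^{ii}-\tfrac{C}{|\nabla w|}\bigl(\sum_i G^{ii}+1\bigr)$, combined with the trace lower bound $\sum_i G^{ii}\ge c_1$ from the AM--GM inequality and Lemma~\ref{C0 pb}; this \emph{does} close to $|\nabla w|\le C$. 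A bound of the form $-C(1+|\nabla w|^2)$ on the right-hand side would not.

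Finally, the opening blowup/normalization paragraph is not needed and is not how the paper argues; the estimate is obtained directly as an a priori bound via the maximum principle, with $p$-independence on $[n+1,n+2]$ tracked through the bound $\sum_i G^{ii}\ge n(h^{p-n-1}f)^{-1/n}$, which is uniform there thanks to Lemma~\ref{C0 pb}.
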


  \begin{proof}
Define $v\coloneqq \log h$,  from \eqref{eq-monge-ampere-Lp}, then $v$ satisfies 
\begin{eqnarray}\label{v equ}
\left\{
		\begin{array}{rcll}\vspace{2mm}\displaystyle
			\det \left(\n^2 v+\n v\otimes \n v+ \s \right)&= &e^{p_{0}v}f\coloneqq \widehat{f},& \quad    \hbox{ in } \C_{\theta},\\
			\n_\mu v &=& \cot\theta   , &\quad  \hbox{ on } \p \C_\theta,
		\end{array}\right.
	\end{eqnarray}  
 where $p_{0}\coloneqq p-n-1$.

 Let $N\coloneqq(0, \cdots, 0, 1-\cos\theta)\in \C_\theta$, we introduce the function $d\coloneqq\frac{1}{2\theta} d_N^2(\xi)$, where $d_N(\xi)$ is the geodesic distance function from $\xi$ to $N$ on $\C_\theta$. It is easy to see that $d$ is well-defined and smooth for all $\xi\in \C_\theta$. Moreover, from direct computation, $d$ satisfies $d=\frac{\theta}2$,  $\n d=\mu$ on $\partial\C_{\theta}$ and 
\begin{eqnarray}\label{positive-define}
    (\n^{2}d) \geq \min\left\{ \frac{1}{\theta}, {\cot\theta}  \right\}\sigma\eqqcolon c_{0}\sigma~~ {\rm{in}}~~ \C_{\theta}. \end{eqnarray} 
    To proceed, we assume that $|\nabla v|$ is sufficiently large; otherwise, the conclusion follows directly. Consider the  function 
\begin{eqnarray*}
\Phi\coloneqq \log|\nabla w|^{2}+\varphi(d),
\end{eqnarray*}
where  $w\coloneqq v-\cot\theta d$, and  $\varphi(d)\coloneqq e^{sd}$ for some constant $s\in \RR$ to be determined later. 
 
Assume that $\Phi$ attains its maximum at some point, say $\xi_0 \in \C_\theta$. We divide the proof into two cases: either  $\xi_0\in \p \C_\theta$ or $\xi_0\in \C_\theta\setminus \p \C_\theta$.

 \
 
\noindent {\it Case 1}. $\xi_{0}\in \partial \C_{\theta}$. Let $\{e_{i}\}_{i=1}^{n}$ be an orthonormal frame around $\xi_0$ such that $e_{n}=\mu$.  The maximal condition implies
 \begin{eqnarray}\label{Phi-normal}
     0\leq \n_{n}\Phi(\xi_{0})=\frac{ w_{k} w_{kn}}{|\n  w|^{2}}+\varphi'\n_{n}d.
 \end{eqnarray}
 Notice that \begin{eqnarray}\label{w-n-normal}
     w_{n}=v_{n}-\cot\theta d_{n}=0,
 \end{eqnarray} then $|\n w|=\sum\limits_{\a=1}^{n-1} w_\a^2$. From the Gauss-Weingarten equation of $\partial\C_{\theta}\subset \C_{\theta}$,
 \begin{eqnarray}\label{w-alpha-n}
      w_{\alpha n}=\n_{n}( w_{\alpha})-\n  w(\n_{e_{\a}}e_{n})=-\cot\theta  w_{\alpha}.
     \end{eqnarray}
Inserting \eqref{w-n-normal} and \eqref{w-alpha-n} into \eqref{Phi-normal} yields
    \begin{eqnarray*} 
 0\leq \n_{n}\Phi(\xi_{0})=\frac{\sum\limits_{\a=1}^{n-1} w_{\a} w_{\a n}}{|\n  w|^{2}}+\varphi'\n_{n}d=-\cot\theta+se^{sd(\xi_0)}<0 ,    \end{eqnarray*}
where the last inequality follows if we choose $s\leq \s_0$ with $\s_0>0$ satisfying 
\begin{eqnarray}\label{s-choice-1}
    \s_0e^{\frac{\theta \s_{0}} 2} <\cot\theta.
\end{eqnarray}This yields a contradiction. Hence $\xi_0\in \C_\theta \setminus\p \C_\theta$. 

\

\noindent{\it Case 2.} $\xi_{0}\in \C_{\theta}\setminus \partial \C_{\theta}$. 
We reformulate \eqref{v equ} by denoting 
\begin{eqnarray}\label{eq-log-B}
    G(B)\coloneqq \log \det B=\log \widehat{f}
\end{eqnarray} where $B\coloneqq (B_{ij})=(v_{ij}+v_{i}v_{j}+\delta_{ij})$. Now we choose an orthonormal frame  $\{e_{i}\}_{i=1}^{n}$ around $\xi_{0}$, such that  $B_{ij}$ is diagonal at $\xi_0$.  Hence $G^{ij}\coloneqq \frac{\partial G(B)}{\partial B_{ij}}$ is also diagonal at $\xi_0$.   The maximal value condition at $\xi_0$ implies 
\begin{eqnarray}\label{Phi-1}
    0=\Phi_{i}=\frac{|\n  w|^{2}_{i}}{|\n  w|^{2}}+\varphi'd_{i},
\end{eqnarray}
and 
\begin{eqnarray}\label{two-der}
    0&\geq &G^{ij} \Phi_{ij}=G^{ij}\left(\frac{|\n  w|^{2}_{ij}}{|\n  w|^{2}}-\frac{|\n w|^{2}_{i}|\n  w|^{2}_{j}}{|\n  w|^{4}}+\varphi''d_{i}d_{j}+\varphi'd_{ij}\right)\notag \\
    &=&\frac{2G^{ij} w_{k} w_{kij}+2G^{ij} w_{ki} w_{kj}}{|\n  w|^{2}}+G^{ij}\left[(\varphi''-(\varphi')^{2})d_{i}d_{j}+\varphi'd_{ij}\right].
\end{eqnarray}
For the standard metric on spherical cap $\C_\theta$, the commutator formulae gives
		\begin{eqnarray}\label{3-third comm}
			v_{kij}=v_{ijk}+v_k\d_{ij}-v_j\d_{ki}. 
		\end{eqnarray}
  Taking the first derivatives of Eq. \eqref{eq-log-B} in the $e_{k}$ direction, we have 
  \begin{eqnarray}
      G^{ij}(v_{ijk}+2v_{ki}v_{j})=(\log \widehat{f})_{k}. \label{one-der}
  \end{eqnarray}
 Inserting \eqref{3-third comm} and \eqref{one-der} into \eqref{two-der}, we obtain 
  \begin{eqnarray}
      0&\geq &\frac{2G^{ij} w_{k}(v_{kij}-\cot\theta d_{kij})+2G^{ij} w_{ki} w_{kj}}{|\n  w|^{2}}+G^{ij}\left[(\varphi''-(\varphi')^{2})d_{i}d_{j}+\varphi'd_{ij}\right]\notag \\
      &=&\frac{2 w_{k}\left[(\log \widehat{f})_{k}-2G^{ij}v_{ki}v_{j}+v_{k}\sum\limits_{i=1}^{n}G^{ii}-G^{ij}v_{j}\delta_{ki}-\cot\theta d_{kij}\right]}{|\n  w|^{2}}\notag \\
      &&+\frac{2G^{ij} w_{ki} w_{kj}}{|\n  w|^{2}}+G^{ij}\left[(\varphi''-(\varphi')^{2})d_{i}d_{j}+\varphi'd_{ij}\right]\notag  \\
      &\geq & -\frac{C}{|\n  w|} \left(\sum\limits_{i=1}^{n}G^{ii}+1 \right)+\frac{2G^{ij} w_{ki} w_{kj}-4G^{ij}v_{ki}v_{j} w_{k}}{|\n  w|^{2}}\notag \\
      &&+G^{ij}\left[(\varphi''-(\varphi')^{2})d_{i}d_{j}+\varphi'd_{ij}\right],\label{log-C1-1}
  \end{eqnarray}
where the constant $C$ depends on $n$ and $\|\log f\|_{C^{1}(\C_{\theta})}$. To proceed, we denote $\mathcal{I} \coloneqq  2G^{ij} w_{ki} w_{kj}-4G^{ij}v_{ki} w_{k}v_{j}$. A straightforward computation yields
  \begin{eqnarray*}
      \mathcal{I}&=&-4G^{ij} \left(B_{ki}-v_{k}v_{i}-\delta_{ki} \right)v_{j} w_{k}+2G^{ij} \left(B_{ki}-v_{k}v_{i}-\delta_{ki}-\cot\theta d_{ki} \right)\notag \\
      &&\cdot \left(B_{kj}-v_{k}v_{j}-\delta_{kj}-\cot\theta d_{kj} \right)\notag \\
      &= &2G^{ij}B_{ki}B_{kj}-4G^{ij}B_{ki}v_{j} w_{k}-4G^{ij}B_{ki}(v_{k}v_{j}+\delta_{kj}+\cot\theta d_{kj})\notag\\
      &&+2G^{ij}(v_{k}v_{j}+\delta_{kj}+\cot\theta d_{kj})(v_{k}v_{i}+\delta_{ki}+\cot\theta d_{ki})\notag \\
      &&+4G^{ij}v_{i}v_{j}v_{k} w_{k}+4G^{kj}v_{j} w_{k}\notag  \\
      &\geq &2\sum\limits_{i=1}^{n}B_{ii}+ 2|\n v|^{2}G^{ij}v_{i}v_{j}+4G^{ij}v_{i}v_{j}v_{k} w_{k}+4G^{ij}v_{i}v_{j}\notag  \\
      &&-C|\n v| \left(\sum\limits_{i=1}^{n} G^{ii}+1 \right)-8|\n v|^{2}.
  \end{eqnarray*}
Next, we {\bf claim} that:
  \begin{eqnarray}
      2\sum\limits_{i=1}^{n}B_{ii}+4G^{ij}v_{i}v_{j}v_{k} w_{k}+2G^{ij}v_{i}v_{j}|\n v|^{2}-8|\n v|^{2}\geq -C|\n v|. \label{claim}
  \end{eqnarray}
 In fact, from \eqref{Phi-1} we have 
\begin{eqnarray*}
    2 w_{k} \left(B_{ki}-v_{k}v_{i}-\delta_{ki}-\cot\theta d_{ki} \right)=2 w_{k} w_{ki}=-\varphi'd_{i}|\n  w|^{2}, 
\end{eqnarray*}
which implies for all fixed $1\leq i\leq n$, 
\begin{eqnarray}
    2 w_{i}B_{ii} =-\varphi'd_{i}|\n  w|^{2}+2v_{i}v_{k} w_{k}+2 w_{k}(\delta_{ki}+\cot\theta d_{ki}).\label{Bii}
\end{eqnarray}
Without loss of generality, we assume that $v_{1}^{2}\geq v_{2}^{2}\geq \cdots \geq v_{n}^{2}$
, then $v_1$ is also sufficiently large by $v_{1}^{2}\geq \frac{|\n v|^{2}}{n}$, which follows $w_1\neq 0$. From \eqref{Bii} and $v_i=w_i+\cot\theta d_i$, there holds
\begin{eqnarray}
    B_{11}&=&\frac{1}{2 w_{1}}\left[-\varphi'd_{1}|\n  w|^{2}+2 w_{k}v_{k}v_{1}+2 w_{k}(\delta_{k1}+\cot\theta d_{k1})
    \right]\notag \\
    &=&\frac{ w_{k}v_{k}v_{1}}{ w_{1}}-\frac{|\n  w|^{2}}{2 w_{1}}\varphi'd_{1}+O(1)\notag \\
    &=&|\n v|^{2}+O(|\n v|).\label{B11}
\end{eqnarray}
On the other hand, for all $2\leq i\leq n$, if $ w_{i}\neq 0$, then using \eqref{Bii} again yields
\begin{eqnarray}
    B_{ii}=\frac{1}{2 w_{i}}\left[-\varphi'd_{i}|\n  w|^{2}+2 w_{k}v_{k}v_{i}+2 w_{k}(\delta_{ki}+\cot\theta d_{ki})\right].\label{Bii-1}
\end{eqnarray}
Denote $$\mathcal{S}\coloneqq \left\{i: i\geq 2~{\rm{ and }}~v_{i}^{2}(\xi_0)>  \cot^{2}\theta \|\n d\|_{C^{0}(\C_{\theta})}\right\}.$$ If $\mathcal{S}=\emptyset$, then it is easy to see that $|v_{1}|=|\n v|+O(1)$. In this case, together with \eqref{B11}, we obtain
\begin{eqnarray}
4G^{ij}v_{i}v_{j}v_{k} w_{k}+2G^{ij}v_{i}v_{j}|\n v|^{2}&\geq&4G^{11}v_{1}^{2}w_{k}v_{k}+2G^{11}v_{1}^{2}|\n v|^{2}\notag \\
&=&\frac{1}{B_{11}}\left(4v_{1}^{2} w_{k}v_{k}+2v_{1}^{2}|\n v|^{2}\right) \notag \\
&=&6|\n v|^{2}+O(1). \label{empty}
\end{eqnarray}
Substituting \eqref{B11} and \eqref{empty} into the left-hand side of \eqref{claim}, we prove the claim holds. We now turn to the case $\mathcal{S} \neq \emptyset$. For each $i \in \mathcal{S}$, we have $ w_i \neq 0$. From \eqref{Bii-1}, it follows that 
\begin{eqnarray*}
   && \sum\limits_{i\in \mathcal{S}}4G^{ij}v_{i}v_{j}v_{k} w_{k}+2G^{ij}v_{i}v_{j}|\n v|^{2}\notag  =4\sum\limits_{i\in \mathcal{S}}\frac{v_{i}^{2}v_{k}  w_{k}}{B_{ii}}+2\sum\limits_{i\in \mathcal{S}}\frac{v_{i}^{2}|\n v|^{2}}{B_{ii}}\notag \\
    &=&\sum\limits_{i\in \mathcal{S}}\frac{v_{i}^{2} w_{i}\left(4v_{k} w_{k}+2|\n v|^{2}\right)}{-\frac{1}{2}\varphi'd_{i}|\n  w|^{2}+ w_{k}v_{k}v_{i}+ w_{k}(\delta_{ki}+\cot\theta d_{ki})}
.
\end{eqnarray*}
 Since \begin{eqnarray*}
    \sum_{i\in \mathcal S} v_i w_i=\sum_k w_k v_k +O(1)=|\n v|^{2}+O(|\n v|),
\end{eqnarray*} 
we have 
\begin{eqnarray*}
&&\sum\limits_{i\in \mathcal{S}}\frac{v_{i}^{2} w_{i}}{-\frac{1}{2}\varphi'd_{i}|\n  w|^{2}+ w_{k}v_{k}v_{i}+ w_{k}(\delta_{ki}+\cot\theta d_{ki})}\notag\\
&=&\sum\limits_{i\in \mathcal{S}}\frac{v_{i}^{2} w_{i}}{ w_{k}v_{k}v_{i}}\cdot  \left[{1-\frac{\varphi'd_{i}|\n  w|^{2}}{2 w_{k}v_{k}v_{i}}+\frac{ w_{k}(\delta_{ki}+\cot\theta d_{ki})}{ w_{k}v_{k}v_{i}}} \right]^{-1} \notag \\
&\geq &\sum\limits_{i\in \mathcal{S}}\frac{v_{i}^{2} w_{i}}{ w_{k}v_{k}v_{i}} \left[1+\frac{\varphi'd_{i}|\n  w|^{2}}{2 w_{k}v_{k}v_{i}}-\frac{ w_{k}(\delta_{ki}+\cot\theta d_{ki})}{ w_{k}v_{k}}\right]\notag \\
&\geq& \sum\limits_{i\in \mathcal{S}}\left(\frac{v_{i} w_{i}}{ w_{k}v_{k}}+\frac{ w_{i}\varphi'd_{i}|\n  w|^{2}}{2( w_{k}v_{k})^{2}}\right)+O(|\n v|^{-1})=1+O(|\n v|^{-1}).
\end{eqnarray*}
And hence we obtain 
\begin{eqnarray*}
    \sum\limits_{i\in \mathcal{S}}4G^{ij}v_{i}v_{j}v_{k} w_{k}+2G^{ij}v_{i}v_{j}|\n v|^{2}=6|\n v|^{2}+O(|\n v|),
\end{eqnarray*}
which, together with \eqref{B11}, yields claim \eqref{claim}. 

Now inserting \eqref{claim} into \eqref{log-C1-1}, we derive
\begin{eqnarray}
    0&\geq & -\frac{C \left(\sum\limits_{i=1}^{n}G^{ii}+1 \right)}{|\n  w|}+\frac{4G^{ij}v_{i}v_{j}-C|\n v|\left(\sum\limits_{i=1}^{n}G^{ii}+1 \right)}{|\n  w|^{2}}\notag \\
    &&+G^{ij}\left[(\varphi''-(\varphi')^{2})d_{i}d_{j}+\varphi'd_{ij}\right]\notag \\
    &\geq& -\frac{C \left(\sum\limits_{i=1}^{n}G^{ii}+1\right)}{|\n  w|}
    +G^{ij}\left[(\varphi''-(\varphi')^{2})d_{i}d_{j}+\varphi'd_{ij}\right].\label{3.31}
\end{eqnarray}
Since 
\begin{eqnarray*}
      \left(\varphi''-(\varphi')^{2}\right)G^{ij}d_{i}d_{j}&=&\theta^{-2}s^{2}(e^{sd}-e^{2sd}) d_{N}^{2}G^{ij}(d_{N})_{i}(d_{N})_{j}  \\
      &\geq& -2  s^{2}e^{2sd}\sum\limits_{i=1}^{n}G^{ii},  
\end{eqnarray*}
together with \eqref{positive-define} and \eqref{3.31}, we obtain 
\begin{eqnarray*}
    0&\geq & \left(c_{0}se^{sd}-2 s^{2}e^{2s d} \right)\sum\limits_{i=1}^{n}G^{ii}-\frac{C}{|\n  w|}\left(\sum\limits_{i=1}^{n}G^{ii}+1\right).
\end{eqnarray*}
Choose a small positive constant $s\leq \s_1$ with $\s_1$ satisfying \begin{eqnarray}\label{s-choice-2}
    \frac{c_{0}}{2}-2\s_1 e^{ \frac{\theta \sigma_{1}}2}>0,
\end{eqnarray} where $c_0$ is given by \eqref{positive-define} and is positive since $\theta<\frac \pi 2$. Then it follows 
\begin{eqnarray}
    0&\geq& \frac{1}{2}c_{0}se^{sd}\sum\limits_{i=1}^{n}G^{ii}-\frac{C}{|\n  w|} \left(\sum\limits_{i=1}^{n}G^{ii}+1 \right). \label{end}
\end{eqnarray}
Using the arithmetic-geometric inequality and \eqref{up-low}, we get
\begin{eqnarray}\label{sum-lower}
    \sum\limits_{i=1}^{n}G^{ii}=\sum\limits_{i=1}^{n}\frac{1}{B_{ii}}\geq n(\det B)^{-\frac{1}{n}}=n(h^{p-n-1}f)^{-\frac{1}{n}}\geq c_{1},
\end{eqnarray}for some positive constant $c_1$. Inserting \eqref{sum-lower} into \eqref{end}, we conclude that $$|\n w|\leq C,$$ which in turn implies $\max\limits_{\C_{\theta}}|\n v|\leq C$. 

\

In conclusion, we can choose $s = \min \{\sigma_0, \sigma_1\}$ such that the conditions \eqref{s-choice-1} and \eqref{s-choice-2} are satisfied. Therefore, we complete the proof.
  \end{proof}

\subsection{\texorpdfstring{$C^{2}$}{C2} estimate}
In this subsection, we establish the a priori $C^2$ estimate for the positive solution of Eq. \eqref{eq-monge-ampere-Lp}. 
\begin{lem}\label{lem c2 boundary}
    Let $p>1$ and $\theta\in (0,\frac{\pi}{2})$. Suppose  that $h$ is a positive  solution to  Eq. \eqref{eq-monge-ampere-Lp}, and denote $M\coloneqq \sup\limits_{\partial \C_{\theta}}|\n^{2}h(\mu, \mu)|$. If $p> 1$, then there holds
    \begin{eqnarray}
        \max\limits_{\C_{\theta}}|\n^{2}h|\leq M+C, \label{nor-1}
    \end{eqnarray}
    where $C>0$ depends  on $n, p, \min\limits_{\C_{\theta}}f, \min\limits_{\C_{\theta}}h,  \|f\|_{C^{2}(\C_{\theta})}$ and $\|h\|_{C^{0}(\C_{\theta})}$. Furthermore, if $n+1\leq p\leq n+2$, the constant $C$ is independent of $p$. 
\end{lem}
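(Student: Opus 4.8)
The plan is to bound the full Hessian $\n^2 h$ in terms of its double-normal restriction on the boundary, following the Lions--Trudinger--Urbas technique for Monge-Amp\`ere equations with Neumann-type boundary data, as adapted to the spherical cap. Since $A = \n^2 h + h\s > 0$ and $\|h\|_{C^1}$ is controlled by Lemmas \ref{C1} and \ref{lem kC1}, it suffices to give an upper bound for the largest eigenvalue of $A$. First I would set up the standard auxiliary function: pick a unit tangent vector field and consider a quantity of the form
\begin{eqnarray*}
 W(\xi,\tau)\coloneqq \n^2 h(\tau,\tau) + h\,|\tau|^2 + \text{(lower-order correction terms)},
\end{eqnarray*}
where the correction involves $|\n h|^2$, $h$ itself, and crucially a term built from the distance function $d_N$ (as in \eqref{Phi}) designed to neutralize the boundary contribution. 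One maximizes $W$ over $\xi\in\C_\theta$ and over unit vectors $\tau\in T_\xi\C_\theta$.

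\textbf{Interior analysis.} If the maximum of $W$ is attained at an interior point $\xi_0$ in a direction $\tau = e_1$ (extending $\tau$ to a frame diagonalizing $A_{ij}$ at $\xi_0$), I would differentiate the equation $\log\det A = \log f + (p-1)\log h$ twice in the $e_1$ direction, use the concavity of $\log\det$ on positive matrices to discard the bad second-order term $G^{ij,kl}A_{ij1}A_{kl1} \le 0$, apply the commutation formulae \eqref{3-third comm} on the sphere to reorder covariant derivatives (this produces zeroth-order curvature terms that are absorbed by $A$ itself), and feed in the first-order maximality condition $\n_i W = 0$ to eliminate the gradient-of-Hessian terms. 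The outcome is an inequality of the form $0 \ge c\,\sum_i G^{ii}\,A_{11} - C\sum_i G^{ii} - C$ at $\xi_0$, and since $\sum_i G^{ii} = \sum_i A_{ii}^{-1} \ge n(\det A)^{-1/n} = n(fh^{p-1})^{-1/n} \ge c_1 > 0$ by the arithmetic-geometric inequality and the $C^0$ bounds, this yields $A_{11}(\xi_0) \le C$, hence $W \le C$ everywhere, giving \eqref{nor-1} with $M$ not even entering.

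\textbf{Boundary analysis --- the main obstacle.} The real difficulty is the case where $W$ attains its maximum at a boundary point $\xi_0\in\p\C_\theta$. Choosing an orthonormal frame with $e_n = \mu$, one has the structural relations $h_{\alpha n} = 0$ for $\alpha < n$ (from \cite[Proposition 2.8]{MWW-AIM}) and the Gauss--Weingarten identities for $\p\C_\theta\subset\C_\theta$. The subtle point is that a purely tangential maximizing direction $\tau = e_\alpha$ can be handled by differentiating the Robin condition $\n_\mu h = \cot\theta\, h$ along $\p\C_\theta$, which converts the normal derivative $\n_\mu W$ into controllable terms precisely because $\p\C_\theta$ is strictly convex when $\theta<\frac\pi2$; this strict convexity (the curvature of $\p\C_\theta$ in $\C_\theta$ being $\cot\theta>0$) is what makes the boundary term have the right sign. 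The correction term involving $\varphi(d_N)$ is tuned, exactly as in the proof of Lemma \ref{lem kC1}, so that $\n_\mu W(\xi_0) < 0$, contradicting maximality; this forces $\xi_0$ into the interior and reduces everything to the interior case --- \emph{except} for maximizing directions that are neither tangential nor normal, i.e. genuine mixed directions $\tau = \cos\beta\, e_\alpha + \sin\beta\, \mu$. For those, one cannot avoid the quantity $\n^2 h(\mu,\mu)$, and the best one gets is the bound in terms of $M$; this is why $M$ appears on the right-hand side of \eqref{nor-1}. I expect the bookkeeping in this mixed-direction boundary case --- carefully tracking how the Weingarten terms of $\p\C_\theta$ and the derivatives of $d_N$ combine, and checking the sign of every boundary contribution under the hypothesis $\theta<\frac\pi2$ --- to be the technical heart of the argument, with the $p$-independence for $n+1\le p\le n+2$ following by inspection since $p$ enters only through the factor $h^{p-1}$, whose logarithm and derivatives are already uniformly bounded by the $C^0$ and logarithmic-gradient estimates.
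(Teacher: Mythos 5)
Your interior analysis is essentially the paper's. The paper works with the simpler auxiliary function $P(\xi,\Xi)\coloneqq \n^2 h(\Xi,\Xi)+h(\xi)$ (no $|\n h|^2$ or $d_N$ correction) and differentiates $F(A)=(\det A)^{1/n}=h^{(p-1)/n}f^{1/n}$ rather than $\log\det A$, but concavity, the spherical commutation formula, and the bound $\sum_iF^{ii}\ge 1$ do the work exactly as you describe and yield $h_{11}(\xi_0)\le C$ at an interior maximum.

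The boundary analysis is where your plan has a genuine gap. You want to add a $\varphi(d_N)$-type correction (as in Lemma~\ref{lem kC1}) so that a tangential boundary maximum forces $\n_\mu W<0$ and is ruled out, with $M$ appearing only through the mixed directions. This cannot work. If $\Xi_0=e_1$ is tangent to $\p\C_\theta$ at a boundary maximum of $P$, then differentiating $\n_\mu h=\cot\theta\,h$ twice tangentially and using that the geodesic curvature of $\p\C_\theta\subset\C_\theta$ is $\cot\theta$ gives $h_{n11}=-\cot\theta\,h_{11}+\cot\theta\,h_{nn}$, hence
\begin{eqnarray*}
\n_\mu P(\xi_0)=-\cot\theta\,h_{11}+\cot\theta\,h_{nn}+2h_n .
\end{eqnarray*}
The term $\cot\theta\,h_{nn}$ is of size $M$ and carries the wrong sign, while a $\varphi(d_N)$ correction contributes only a bounded constant to the normal derivative of the test function. (This is unlike the gradient estimate of Lemma~\ref{lem kC1}, where the offensive boundary term is the fixed constant $-\cot\theta$, which a small $s$ in $e^{sd}$ can absorb.) So the tangential boundary maximum cannot be ruled out when $h_{\mu\mu}$ is large. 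The paper does not try to rule it out: from the maximality condition $\n_\mu P(\xi_0)\ge 0$ and $h_n=\cot\theta\,h$ one reads off directly $h_{11}\le h_{nn}+2h\le |h_{\mu\mu}|+2\|h\|_{C^0(\C_\theta)}$, which is precisely the asserted bound $M+C$ (this is the content of \cite[Lemma~3.3, Case~2]{MWW-AIM} the paper invokes). Thus $M$ enters through the tangential boundary case every bit as much as through the mixed one; no correction term is needed; and the uncontrolled quantity $M$ is only estimated afterwards, in Lemma~\ref{c2 est}.
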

\begin{proof}
    	We consider the function 
		\begin{eqnarray*}
			P(\xi, \Xi)\coloneqq \n^{2}h(\Xi, \Xi)+h(\xi),
		\end{eqnarray*}
		for $\xi\in \C_{\theta}$ and a unit vector  $\Xi\in T_{\xi  }\C_{\theta}$.
		Suppose that $P$ attains its maximum value at some   point $\xi_{0}\in \C_{\theta} $ and some unit vector $\Xi_{0}\in T_{\xi_{0}}\C_{\theta}$. 
Again, we divide the proof into two cases: either  $\xi_0\in \p \C_\theta$ or $\xi_0\in \C_\theta\setminus \p \C_\theta$.

\
       
           \noindent \textit{Case 1.} $\xi_0\in \C_\theta \setminus\p \C_\theta$. In this case, we choose an orthonormal frame  $\{e_{i}\}_{i=1}^{n}$ around $\xi_{0}$, such that 
		$(A_{ij})=(h_{ij}+h\d_{ij})$ is diagonal and $\Xi_0=e_1$.
		Denote \begin{eqnarray}\label{eq3}
			F(A)\coloneqq \left(\det(A)\right)^{\frac{1}{n}}=h^{\frac{p-1}{n}}f^{\frac{1}{n}}\eqqcolon h^{a}\widetilde{f},
		\end{eqnarray}  and
		\begin{eqnarray*}
			F^{ij}\coloneqq \frac{\partial F(A)}{\partial A_{ij}},
			\quad  F^{ij,kl}\coloneqq \frac{\partial^{2}F(A)}{\partial A_{ij}\partial A_{kl}}.
		\end{eqnarray*}
	The homogeneity of $F$ implies
		\begin{eqnarray}\label{sum-1}
			F^{ij}h_{ij}=F^{ij}(A_{ij}-h \d_{ij})=h^{a}\widetilde{f}-h \sum\limits_{i=1}^{n}F^{ii}.
		\end{eqnarray} 
By	taking twice covariant derivatives in the $e_1$ direction to Eq. \eqref{eq3}, and utilizing the concavity of $F$, we obtain
		\begin{eqnarray}\label{tw0 deri}
			F^{ij}A_{ij11}&=&-F^{ij,kl}A_{ij1}A_{kl1}+(h^{a}\widetilde{f})_{11}\notag  \\
   &\geq& (h^{a})_{11}\widetilde{f}+2ah^{a-1}h_{1}\widetilde{f}_{1}+h^{a}\widetilde{f}_{11}.		\end{eqnarray}
From $p>1$, then $a=\frac{p-1}{n}>0$. To proceed, we assume $A_{11}>0$, otherwise, the conclusion follows. Then
\begin{eqnarray}\label{11-der}
    (h^{a})_{11}=ah^{a-1}(A_{11}-h)+a(a-1)h^{a-2}h_{1}^{2}\geq -ah^{a}+a(a-1)h^{a-2}h_{1}^{2}.
\end{eqnarray}
Inserting  \eqref{11-der} into \eqref{tw0 deri}, we get 
\begin{eqnarray}
    F^{ij}A_{ij11}\geq h^{a}(\widetilde{f}_{11}-a\widetilde{f})+a(a-1)\widetilde{f}h^{a-2}h_{1}^{2}-2ah^{a-1}|\n h||\n \widetilde f|. \label{impor-1}
\end{eqnarray}
		For the standard metric on the spherical cap $\C_\theta$,  we have the commutator formula 
	\begin{eqnarray*}
h_{klij}=h_{ijkl}+2h_{kl}\d_{ij}-2h_{ij}\d_{kl}+h_{li}\d_{kj}-h_{kj}\d_{il},
		\end{eqnarray*} 
from which we have 
		\begin{eqnarray}\label{change}
			F^{ij}h_{11 ij}&=& F^{ij}\left(h_{ij11}+2h_{11}\delta_{ij} -2 h_{ij}+h_{1i}\d_{1j} -h_{1j}\d_{i1}\right)%
		\notag	\\	
   &=&F^{ij}h_{ij11}+2h_{11}\sum\limits_{i=1}^{n}F^{ii}-2F^{ij}h_{ij}. 
\end{eqnarray}
From the arithmetic-geometric mean inequality, we have 
 		\begin{eqnarray}\label{lower bound}
			\sum\limits_{i=1}^{n}F^{ii}=\frac{1}{n} (\det A)^{\frac{1}{n}-1} \sum_{i=1}^n \frac{\partial \det A}{\partial A_{ii}}\geq 1.
		\end{eqnarray} 
  Combining  \eqref{sum-1},  \eqref{impor-1}, \eqref{change} and \eqref{lower bound}, we have 
\begin{eqnarray*}
    0&\geq &F^{ij}P_{ij}=F^{ij}h_{11iij}+F^{ij}h_{ij}\notag \\
    &=&F^{ij}(A_{ij11}-h_{11}\delta_{ij})+2h_{11}\sum\limits_{i=1}^{n}F^{ii}-F^{ij}h_{ij}\notag \\
    &\geq & (h_{11}+h)+ h^{a}\left[\widetilde{f}_{11}-(1+a)\widetilde{f}\right]
    +a(a-1)\widetilde{f}h^{a-2}h_{1}^{2}\notag \\
    &&-2ah^{a-1}|\n h||\n \widetilde f|.
\end{eqnarray*}
Together with Lemma \ref{C1}, for all $p>1$, we conclude that 
		\begin{eqnarray}\label{11}
			h_{11}(\xi_0)\leq C,
		\end{eqnarray}
where the positive constant $C$ depends on $n, p, \min\limits_{\C_{\theta}}f, \min\limits_{\C_{\theta}}h$, $ \|f\|_{C^{2}(\C_{\theta})}$ and $\|h\|_{C^{0}(\C_{\theta})}$. 
Furthermore, if $n+1\leq p\leq n+2$, we have $a\in 
 \left[1,\frac{n+1}n\right]$, which implies that the constant $C$ is independent of $p$.
  
	\

 \noindent\textit{Case 2.} $ \xi_{0} \in \partial \C_{\theta}$. In this case, we can follow the same argument as in \cite[Proof of Lemma 3.3, Case 2]{MWW-AIM} to obtain 
		\begin{eqnarray}\label{1n}
			\n^2	h(\Xi_0,\Xi_0)  \leq  |h_{\mu\mu}|(\xi_{0})+2 \|h\|_{C^0(\C_{\theta})}.
		\end{eqnarray}

Finally, combining \eqref{11} and \eqref{1n}, we conclude that \eqref{nor-1} holds.
\end{proof}

Next, we establish the double normal derivative estimate of $h$ on the boundary, which together with Lemma \ref{c2 est} yields the a priori $C^2$ estimate for the case $p>1$ and $p\neq n+1$.
	
	To begin with, we introduce an important function 
	\begin{eqnarray*}
		\zeta(\xi)\coloneqq e^{- d_{\p\C_\theta}(\xi)}-1,
	\end{eqnarray*}where $d_{\p\C_\theta}(\xi)\coloneqq \text{dist}(\xi,\p\C_\theta)$ is the geodesic distance function to $\p\C_\theta$ from $\xi\in \C_\theta$. 
	The function $\zeta(\xi)$ has been used in \cite[Lemma~3.1]{Gb99} and  \cite[Lemma~3.4]{MWW-AIM}. Notice that $\zeta$ satisfies 
    $$\zeta|_{\partial \C_{\theta}}=0, \quad \text{and} \quad \n \zeta|_{\partial \C_{\theta}}=\mu, $$
and  there exists a small constant $\delta_{0}>0$ such that
	\begin{eqnarray}\label{hessian of zeta}
		(	\n^{2}_{ij}\zeta) \geq \frac{1}{2} \min\{\cot\theta,1\}  \s,\quad {\text{in}}\quad \Omega_{\delta_{0}}.
	\end{eqnarray}
	where $\Omega_{\delta_{0}}\coloneqq \{\xi\in \C_{\theta}: d_{\partial\C_{\theta}}(\xi)\leq \delta_{0}\}$.

	\begin{lem}\label{c2 est}
		Let $p>1$ and $\theta\in (0, \frac{\pi}{2})$.  Suppose $h$ is a positive solution to Eq. \eqref{eq-monge-ampere-Lp}, then
		\begin{eqnarray*}
			\max\limits_{\C_{\theta}}|\n^{2}h|\leq C,
		\end{eqnarray*}
		where the positive constant $C$ depends only on $n, p, \min\limits_{\C_{\theta}}f, \min\limits_{\C_{\theta}}h, \|f\|_{C^{2}(\C_{\theta})}$ and  $\|h\|_{C^0(\C_{\theta})}$. Furthermore, if $n+1\leq p\leq n+2$,  the constant $C$ is independent of $p$.

	\end{lem}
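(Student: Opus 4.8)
The plan is to complete the $C^2$ estimate by combining Lemma~\ref{lem c2 boundary} with a boundary double-normal estimate, so the only task remaining is to bound $M\coloneqq \sup_{\p\C_\theta}|\n^2 h(\mu,\mu)|$ by a controlled constant. Following the strategy of Lions--Trudinger--Urbas \cite{LTU} (and its Neumann-boundary adaptation in \cite{MQ}), I would introduce an auxiliary function on a collar neighborhood $\Omega_{\delta_0}$ of $\p\C_\theta$ of the form
\begin{eqnarray*}
   W\coloneqq \n^2 h(\eta,\eta) + \text{(lower order corrections)} + K_1\zeta + K_2|\n h|^2,
\end{eqnarray*}
where $\eta$ is a suitable unit tangent-to-the-boundary direction (or a globally defined vector field extending a boundary tangent direction), $\zeta$ is the barrier function with the properties recorded above (namely $\zeta|_{\p\C_\theta}=0$, $\n\zeta|_{\p\C_\theta}=\mu$, and $\n^2\zeta\ge \tfrac12\min\{\cot\theta,1\}\,\s$ on $\Omega_{\delta_0}$), and $K_1,K_2$ are large constants to be chosen. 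The point of the $\zeta$-term is to exploit the strict convexity of $\p\C_\theta\subset\C_\theta$, which holds precisely because $\theta<\tfrac\pi2$; this is what converts the Robin boundary condition $\n_\mu h=\cot\theta\,h$ into a usable sign.

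The key steps, in order, are as follows. First, differentiate the Robin boundary condition tangentially along $\p\C_\theta$ to express the mixed derivatives $h_{\mu\alpha}$ (for $\alpha$ tangent to $\p\C_\theta$) in terms of lower-order quantities and the second fundamental form of $\p\C_\theta$ — this is the standard device for controlling the tangential-normal block of the Hessian on the boundary. Second, with the pure tangential second derivatives already controlled by the $C^1$ estimate (Lemma~\ref{C1}) applied to the boundary restriction of $h$, reduce the boundary $C^2$ bound to estimating $h_{\mu\mu}$ at an interior-of-the-collar maximum point of $W$. Third, at such a maximum, apply the linearized operator $F^{ij}$ (or $G^{ij}$) to $W$: the good term $K_1 F^{ij}\zeta_{ij}\gtrsim K_1\sum F^{ii}$ coming from \eqref{hessian of zeta} must dominate the bad terms generated by differentiating the equation \eqref{eq3} twice and by the commutation formulas on $\C_\theta$. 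Here one uses $\sum F^{ii}\ge 1$ from \eqref{lower bound} and the already-established $C^0,C^1$ bounds; when $n+1\le p\le n+2$ one checks, as in the proof of Lemma~\ref{lem c2 boundary}, that $a=\tfrac{p-1}{n}\in[1,\tfrac{n+1}{n}]$ keeps all constants $p$-independent. Fourth, a standard dichotomy: either the maximum of $W$ is attained on $\p\C_\theta$, where the boundary condition forces $\n_\mu W\le 0$ and a direct computation (using $\n\zeta=\mu$ and the differentiated Robin condition) yields a contradiction for $K_1$ large; or it is attained in the interior of $\Omega_{\delta_0}$, handled by step three; or on the inner boundary $\{d_{\p\C_\theta}=\delta_0\}$, where $\zeta$ is bounded below away from its boundary value and the global estimate of Lemma~\ref{lem c2 boundary} already applies. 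In all cases $W\le C$, hence $h_{\mu\mu}\le C$ on $\p\C_\theta$, and then Lemma~\ref{lem c2 boundary} upgrades this to the claimed global bound.

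I expect the main obstacle to be the third step: orchestrating the test function $W$ so that the negative contributions from $F^{ij,kl}A_{ij1}A_{kl1}$ (controlled only by concavity of $F$, as in \eqref{tw0 deri}), from the third-order commutator terms on the curved background $\C_\theta$, and from the cross terms $F^{ij}$ applied to the $|\n h|^2$ and $\zeta$ pieces, are all absorbed into the single good term $K_1\min\{\cot\theta,1\}\sum F^{ii}$ without creating a circular dependence on $\max|\n^2 h|$. The delicate bookkeeping is choosing $K_2\gg 1$ first (to dominate the first-order junk), then $K_1\gg K_2$ (to dominate everything produced at second order), and finally $\delta_0$ small enough that \eqref{hessian of zeta} holds throughout the collar; the restriction $\theta<\tfrac\pi2$ enters decisively here, since $\cot\theta>0$ is exactly what makes $\min\{\cot\theta,1\}>0$ and hence the barrier effective. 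A secondary technical point is the correct choice of the frame/vector field $\eta$ near $\p\C_\theta$ so that $\n^2 h(\eta,\eta)$ differs from the true largest eigenvalue only by terms already under control; this is routine but must be done carefully to keep the $p$-independence when $n+1\le p\le n+2$.
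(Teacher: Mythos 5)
Your overall plan is right: combine Lemma~\ref{lem c2 boundary} with a boundary double-normal estimate obtained from a barrier on a collar $\Omega_{\delta_0}$, and exploit $\theta<\frac\pi2$ via the strict convexity encoded in $\nabla^2\zeta\geq \tfrac12\min\{\cot\theta,1\}\,\sigma$. But the specific test function you propose, $W=\nabla^2h(\eta,\eta)+K_1\zeta+K_2|\nabla h|^2$ with $\eta$ tangent, does not deliver a bound on $h_{\mu\mu}$. Applying $F^{ij}$ to $W$ produces fourth-order terms from $\nabla^2h(\eta,\eta)$, which is exactly the Pogorelov-type computation already performed in Lemma~\ref{lem c2 boundary}; reproducing it here adds nothing new and only controls tangential second derivatives. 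At a boundary maximum of $W$ the normal derivative (which is $\geq 0$ there, not $\leq 0$ as you wrote — $\mu$ is the outward normal and the collar lies in the direction $-\mu$) involves $\nabla_\mu(\nabla^2h(\eta,\eta))$, a third-order quantity that is not controlled a priori and does not produce a contradiction for $K_1$ large. And even if you did bound the full tangential Hessian block on $\partial\C_\theta$, the equation $A_{\mu\mu}\det(A_{\alpha\beta})=fh^{p-1}$ only bounds $A_{\mu\mu}$ from above if $\det(A_{\alpha\beta})$ is bounded \emph{below}, which an upper bound on tangential second derivatives does not provide.

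The paper's (and, in fact, the genuine Lions--Trudinger--Urbas) construction is a \emph{first-order} barrier: $Q=\langle\nabla h,\nabla\zeta\rangle-\bigl(L+\tfrac{M}{2}\bigr)\zeta-\cot\theta\,h$ on $\Omega_{\delta_0}$. The decisive feature, which your $W$ lacks, is that $Q\equiv 0$ on $\partial\C_\theta$ thanks to $\nabla\zeta|_{\partial\C_\theta}=\mu$ and the Robin condition $\nabla_\mu h=\cot\theta\,h$. Computing $F^{ij}Q_{ij}$ only requires differentiating the Monge--Amp\`ere equation \emph{once} (third-order terms), so concavity plays no role and there is no circularity; the good barrier term $-\bigl(L+\tfrac{M}{2}\bigr)F^{ij}\zeta_{ij}\lesssim -\bigl(L+\tfrac{M}{2}\bigr)\sum F^{ii}$ from \eqref{hessian of zeta} dominates for $L$ large, ruling out an interior minimum; $Q>0$ on the inner collar boundary is forced by the $\zeta$-term; hence $Q\geq 0$ in $\Omega_{\delta_0}$. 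Then at the boundary point where $h_{\mu\mu}$ is maximal, $\nabla_\mu Q\leq 0$ produces $h_{\mu\mu}\leq L+\tfrac{M}{2}+C$ directly, and the mirror barrier $\overline Q$ gives the lower bound. Feeding this into $M\leq L+\tfrac{M}{2}+C$ closes the estimate. The missing idea in your proposal is precisely this choice of a first-order quantity that vanishes on $\partial\C_\theta$ by the boundary condition and whose normal derivative \emph{is} $h_{\mu\mu}$ up to lower-order terms.
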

	\begin{proof}
We consider an auxiliary function
		\begin{eqnarray*}
			Q(\xi)\coloneqq \<\n h, \n \zeta\>-\left(L+\frac{1}{2}M\right)\zeta(\xi)-\cot\theta h(\xi), \quad  \xi\in \Omega_{\delta_{0}},
		\end{eqnarray*}
		where	$L$ is a positive constant that will be determined later. We claim that $Q\ge 0$  in $\Omega_{\delta_0}$. 
        
        Let $\xi_0$ be a minimum point of  $Q$ in $\overline \Omega_{\delta_0}$.
We first show that $\xi_0\in \partial \Omega_{\delta_0}$.
		Assume by contradiction that $\xi_{0}\in \left(\Omega_{\delta_{0}}\setminus \partial \Omega_{\delta_{0}}\right)$. By choosing an orthonormal frame  $\{e_{i}\}_{i=1}^{n}$ around $\xi_{0}$  such that  $(A_{ij})$ is diagonal at $\xi_{0}$. From \eqref{3-third comm}, \eqref{hessian of zeta}, at $\xi_0$, we obtain
		\begin{eqnarray*}
			0&\leq & 	 F^{ij}Q_{ij} 
			\\&=&F^{ij}h_{kij}\zeta_{k}+F^{ij}h_{k}\zeta_{kij}+2F^{ij}h_{ki}\zeta_{kj}-\left(L+\frac{1}{2}M\right)F^{ii}\zeta_{ii}-\cot\theta F^{ii}h_{ii}
		\\&=&F^{ii}\left(A_{iik}-h_{i}\delta_{ki}
			\right)\zeta_{k}+2F^{ii}(A_{ii}-h)\zeta_{ii}-\left(L+\frac{1}{2}M\right)F^{ii}\zeta_{ii}\\
        &&+F^{ii}h_{k}\zeta_{kii}-\cot\theta F^{ii}h_{ii}\\
         &=&\zeta_{k} (h^{a}\widetilde f)_{k}-\sum\limits_{i=1}^{n}F^{ii}h_{i}\zeta_{i}+2F^{ii}A_{ii}\zeta_{ii}-2hF^{ii}\zeta_{ii}+F^{ii}h_k\zeta_{kii}-\cot\theta F^{ii}h_{ii}
			\\
   && -\left(L+\frac{1}{2}M\right)F^{ii}\zeta_{ii}\\
     &\leq  & C_{1}\left(h+|\n h|\right)\left(h^{a-1}+\sum\limits_{i=1}^{n}F^{ii}\right)-\frac{1}{2}\left(L+\frac{1}{2}M\right)\min\{\cot\theta,1\}\sum\limits_{i=1}^{n}F^{ii},
		\end{eqnarray*}
		where the constant $C_{1}$ depends on $a,  \min\limits_{\C_{\theta}}f$ and    $\|f\|_{C^1(\C_{\theta})}$.
		In view of \eqref{lower bound} and \eqref{gradient ps},
		it follows 
		\begin{eqnarray*}
			C_{1}(h+|\n h|)\left(h^{a-1}+\sum\limits_{i=1}^{n}F^{ii}\right)-\frac{1}{2}\left(L+\frac{1}{2}M\right)\min\{\cot\theta,1\}\sum\limits_{i=1}^{n}F^{ii}<0,
		\end{eqnarray*} 
		if we choose $L$ as
		\begin{eqnarray}\label{chosen of A}
			L&\coloneqq &\frac{4C_{1}\left[1+(1+\cot^{2}\theta)^{\frac{1}{2}}\right]\left(\|h\|_{C^{0}(\C_{\theta})}^{\frac{p-1}{n}}+\|h\|_{C^{0}(\C_{\theta})}\right)}{\min\{\cot\theta,1\}}\notag \\
   &&+\frac{1}{1-e^{-\delta_{0}}}\max\limits_{   \C_{\theta}} \left(|\n h|+\cot\theta  h \right). 
		\end{eqnarray} 
		   This  contradicts  $F^{ij}Q_{ij} \geq 0$ at $\xi_0$, which follows $\xi_0\in \p \O_{\d_0}$. Next, we proceed with two cases:
		\begin{enumerate}
		    \item 
			If  $\xi_0\in  \p \O_{\d_0} \cap \partial \C_{\theta}$, it is easy to observe that $Q(\xi_0)=0$. 
		
	\item 	If $\xi_0\in \partial \Omega_{\delta_{0}}\setminus  \partial \C_{\theta} $, we have $d(\xi_0)=\delta_0$, and from our choice of $L$ in \eqref{chosen of A},
		\begin{eqnarray*}
			Q(\xi)\geq -|\n h|+L(1-e^{-\delta_{0}})-\cot\theta h>0.
		\end{eqnarray*}
		\end{enumerate}	
Consequently, 
		\begin{eqnarray*}
			Q(\xi)\geq 0,\quad{\text{in}}\quad \Omega_{\delta_{0}}.
		\end{eqnarray*} 
		
		\
		
		Now we are ready to obtain the double normal second derivative estimate of $h$. 
		Assume  $h_{\mu\mu}(\eta_{0})\coloneqq \sup\limits_{\partial \C_{\theta}}h_{\mu\mu}>0$ for some $\eta_0\in \p \C_\theta$. In view of \eqref{h_1n}, \eqref{gradient ps} and $Q\equiv 0$ on $\p \C_\theta$, we have
		\begin{eqnarray*}
			0&\geq &Q_{\mu}(\eta_{0})\\
			&\geq &(h_{k\mu}\zeta_{k}+h_{k}\zeta_{k\mu})-\left(L+\frac{1}{2}M\right)\zeta_{\mu}-\cot\theta h_{\mu}\\
			&=& h_{\mu\mu}(\eta_{0})-\left(L+\frac{1}{2}M\right)-h_{k}\zeta_{k\mu}-\cot^{2}\theta h,
		\end{eqnarray*}
		which yields  
		\begin{eqnarray}\label{sup estimate}
			\max\limits_{\partial \C_{\theta}}h_{\mu\mu}\leq L+C_{2}\|h\|_{C^{0}(\C_{\theta})}+\frac{1}{2}M,
		\end{eqnarray}
  where $C_{2}$ is a uniform constant. 
		Similarly, we consider an auxiliary function as
		\begin{eqnarray*}
			\ov{Q}(\xi)\coloneqq \<\n h, \n \zeta\>+\left(\bar{L}+\frac{1}{2}M\right)\zeta(\xi)-\cot\theta h, \quad \xi\in \Omega_{\delta_{0}},
		\end{eqnarray*}
		where 	$\bar{L}>0$ is a positive constant to be determined later. Adapting the similar argument as above, we get $$\ov{Q}(\xi)\leq 0 ~\text{ in }   \Omega_{\delta_{0}},$$ and furthermore
		\begin{eqnarray}\label{inf estimate}
			\min\limits_{\partial \C_{\theta}}h_{\mu\mu}\geq -\bar{L}-C_{2}\|h\|_{C^{0}(\C_{\theta})}-\frac{1}{2}M.
		\end{eqnarray}
		
		Finally, \eqref{sup estimate} and \eqref{inf estimate} together yield
        	\begin{eqnarray*}
			\max\limits_{\partial \C_{\theta}}|h_{\mu\mu}|\leq \max\{L, \bar{L}\}+C_{2}\|h\|_{C^{0}(\C_{\theta})}+\frac M2.
		\end{eqnarray*} It follows
		\begin{eqnarray*}
		M=	\max\limits_{\partial \C_{\theta}}|h_{\mu\mu}|\leq 2\left(\max\{L, \bar{L}\}+C_{2}\|h\|_{C^{0}(\C_{\theta})} \right).
		\end{eqnarray*}
		If $p>1$, together with Lemma \ref{lem c2 boundary}, we obtain
		\begin{eqnarray*}
			\max\limits_{\C_{\theta}}|\n^{2}h|\leq C,
		\end{eqnarray*}
 where $C>0$ depends  on $n, p, \min\limits_{\C_{\theta}}f, \min\limits_{\C_{\theta}}h,  \|f\|_{C^{2}(\C_{\theta})}$ and $ \|h\|_{C^{0}(\C_{\theta})}$. 
	\end{proof}

Finally, we complete the proof of Theorem \ref{thm priori est}.
\begin{proof}[\textbf{Proof of Theorem~\ref{thm priori est}}]
We begin by proving parts (1) and (2) simultaneously. If $p\neq n+1$,  combining Lemma \ref{C0 pb}, Lemma \ref{C0 ps}, and Lemma  \ref{c2 est}, we obtain
    \begin{eqnarray*}
      c\leq \min\limits_{\C_{\theta}}h, \quad\text{and}\quad  \|h\|_{C^{2}(\C_{\theta})} \leq C. 
    \end{eqnarray*}
   By the theory of fully nonlinear second-order uniformly elliptic equations with oblique boundary conditions (cf. \cite[Theorem 1.1]{LT}, \cite{LTU}) and the Schauder estimate, we have \eqref{thm-C2}.
  
     Next, we prove \eqref{re-est}. If $n+1<p<n+2$, the function $\widetilde h\coloneqq \frac{h}{\min\limits_{\C_{\theta}}h}$ satisfies 
     \begin{eqnarray*}
		\begin{array}{rcll}
			\det(\n^2 \widetilde h+\widetilde h\s )&=&  (\min\limits_{\C_{\theta}}h)^{p-(n+1)} f \widetilde{h}^{p-1},& \quad    \hbox{ in } \C_{\theta},\\
			\n_\mu \widetilde h &=& \cot\theta  \widetilde h, &\quad  \hbox{ on } \p \C_\theta.
		\end{array}
\end{eqnarray*}
From  Lemma \ref{lem kC1}, we have $$1\leq \widetilde{h}\leq C ~~~~ \text{ and } ~~~~|\n \widetilde h|\leq C.$$ Lemma \ref{C0 pb} implies that $(\min\limits_{\C_{\theta}}h)^{p-n-1}$ has a uniformly positive upper and lower bounds, thus together with  Lemma \ref{c2 est}, we conclude that $$\|\wt h\|_{C^2(\C_\theta)}\leq C.$$ Therefore, using the Evans-Krylov theorem and the Schauder theory, we conclude that \eqref{re-est} holds. This completes the proof.

\end{proof}

\section{Proof of Theorem \ref{thm-1}}\label{sec-4}
\
In this section, we use the continuity method to complete the proof of Theorem \ref{thm-1}, as in \cite[Section 4]{MWW-AIM}.
Let 
\begin{eqnarray*}
    f_{t}\coloneqq (1-t)\ell^{1-p}+tf, \quad \text{for}\quad 0\leq t\leq 1.
\end{eqnarray*}
Consider the problem 
\begin{eqnarray}\label{eqt}
		\begin{array}{rcll}
			\det(\n^2 h+h\s )&=& h^{p-1} f_{t}, & \quad    \hbox{ in } \C_{\theta},\\
			\n_\mu h &=& \cot\theta  h, &\quad  \hbox{ on } \p \C_\theta.
		\end{array}
	\end{eqnarray} 
Define the set 
\begin{eqnarray*} 
    \mathcal{H}\coloneqq \left\{h\in C^{4,\alpha}(\C_{\theta}): \n_{\mu}h=\cot\theta h~\text{on}~\partial \C_{\theta} \right\},
\end{eqnarray*}
and when $p>n+1$, we denote
\begin{eqnarray*}
		\mathcal{I}\coloneqq  \left\{ t\in[0,1]: \text{Eq.}~\eqref{eqt}~\text{has~a~positive~solution~in~}\mathcal{H}  \right\},
\end{eqnarray*}
when $1<p<n+1$, we denote
\begin{eqnarray*}
 	\mathcal{I}\coloneqq  \left\{ t\in[0,1]: \text{Eq.}~\eqref{eqt}~\text{has~a~positive, capillary~even~solution~in~}\mathcal{H}  \right\}.
\end{eqnarray*}
We rewrite Eq. \eqref{eqt} as
\begin{eqnarray}\label{det-A-f-t}
    \mathcal{G}(A)\coloneqq \det(A)=h^{p-1}f_{t},
\end{eqnarray} and the linearized operator $L_{h}$ of Eq. \eqref{det-A-f-t} is 
\begin{eqnarray*}
    L_{h}(v)\coloneqq \mathcal{G}^{ij}(v_{ij}+v\sigma_{ij})-(p-1)f_{t} h^{p-2}v,
\end{eqnarray*}where $\mathcal{G}^{ij}\coloneqq\frac{\partial \mathcal{G}(A)}{\partial A_{ij}}$.
Since $h$ is a convex function, we know that $\mathcal{G}^{ij}$ is a positive definite matrix.  Next, we adopt the technique in  \cite[Section~4.3]{GX} and show that the kernel of operator $L_{h}$ is trivial when $p\neq n+1$.
\begin{lem}\label{ker lem}
Let $p\neq n+1>1$ and $\theta\in(0, \frac{\pi}{2})$. Suppose $v\in \mathcal{H}$ and $v\in {\rm{Ker}}(L_{h})$, then $v\equiv 0$.   
\end{lem}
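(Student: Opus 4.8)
The plan is to show that the linearized operator $L_h$ has trivial kernel by combining a maximum principle argument with an integral (divergence) identity, following the strategy of \cite{GX}. First I would observe that $L_h(v) = \mathcal{G}^{ij}(v_{ij} + v\sigma_{ij}) - (p-1)f_t h^{p-2} v$ is a second-order elliptic operator (since $\mathcal{G}^{ij} > 0$) with a Robin boundary condition inherited from $\mathcal{H}$, namely $\nabla_\mu v = \cot\theta\, v$ on $\partial\C_\theta$. The zeroth-order coefficient $-(p-1)f_t h^{p-2}$ has a definite sign when $p > 1$ (it is negative), so one is tempted to apply the maximum principle directly; but the Robin boundary term $\cot\theta > 0$ (since $\theta < \pi/2$) fights the interior sign, exactly the kind of sign tension flagged in the introduction. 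So a naive maximum principle will not immediately close the argument, and this is the main obstacle.

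The way around this is the standard trick for Minkowski-type linearized operators: test the equation $L_h(v) = 0$ against a suitably weighted function and integrate by parts over $\C_\theta$, using the Robin boundary condition to control the boundary integral. Concretely, I would multiply $L_h(v) = 0$ by $v$ (or by $v$ times an appropriate power of $h$ or of $\ell$, to symmetrize), integrate over $\C_\theta$, and integrate by parts the term $\int \mathcal{G}^{ij} v_{ij} v$. The divergence-free property of the cofactor matrix of $A$ (i.e., $\nabla_j \mathcal{G}^{ij} = 0$, which holds for $\mathcal{G} = \det$ evaluated at a Hessian-type tensor on the sphere by the contracted second Bianchi-type identity) is what makes this integration by parts clean: it produces $-\int \mathcal{G}^{ij} v_i v_j$ plus a boundary term $\int_{\partial\C_\theta} \mathcal{G}^{ij} v_i \mu_j\, v$. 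On the boundary one uses $\nabla_\mu v = \cot\theta\, v$ together with the structure of $\mathcal{G}^{ij}$ at the boundary (as in the computation $h_{\alpha n} = 0$ from \cite[Proposition 2.8]{MWW-AIM}, which decouples the normal direction) to evaluate this boundary term explicitly. The upshot is an identity of the schematic form
\begin{eqnarray*}
\int_{\C_\theta} \mathcal{G}^{ij} v_i v_j\, d\sigma + (p-1)\int_{\C_\theta} f_t h^{p-2} v^2\, d\sigma = \int_{\C_\theta}\left(\sum_i \mathcal{G}^{ii}\right) v^2\, d\sigma + (\text{boundary terms that vanish or have a favorable sign}),
\end{eqnarray*}
after accounting for the $v\,\sigma_{ij}$ term contributing $-\int (\sum_i \mathcal{G}^{ii}) v^2$ with the opposite sign, so one has to be careful about the net sign here.

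The key point — and where the hypothesis $p \neq n+1$ enters — is a comparison between $\int \mathcal{G}^{ij} v_i v_j$, $\int (\sum_i \mathcal{G}^{ii}) v^2$, and $\int f_t h^{p-2} v^2$ using the equation $\det A = h^{p-1} f_t$ itself and homogeneity of $\det$. Since $\mathcal{G}^{ij} A_{ij} = n\det A = n h^{p-1} f_t$ and $A_{ij} = v_{ij} + v\sigma_{ij}$-type terms relate $\sum \mathcal{G}^{ii}$ to the geometry, one can show that when $p > n+1$ the combination is strictly negative definite unless $v \equiv 0$, and when $1 < p < n+1$ the capillary-even constraint eliminates the remaining degree of freedom (the translations, which are the genuine kernel at $p = 1$, and the dilation, which is the genuine kernel at $p = n+1$). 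I expect the cleanest route is: from the integral identity deduce $\int \mathcal{G}^{ij} v_i v_j \le (n+1-p)\int f_t h^{p-2} v^2$ (up to constants and using $\sum \mathcal{G}^{ii} \ge$ something via arithmetic–geometric mean as in \eqref{lower bound}), so for $p > n+1$ the right side is $\le 0$ forcing $v_i \equiv 0$ hence $v$ constant, and then the Robin condition $\nabla_\mu v = \cot\theta v$ with $\cot\theta \neq 0$ plus $v$ constant forces $v \equiv 0$; for $1 < p < n+1$ one gets $v$ is at worst a combination of the coordinate functions $\langle\xi, E_\alpha\rangle$-type eigenfunctions, and capillary evenness ($v(\hat\xi) = v(\xi)$) kills exactly those, leaving $v \equiv 0$. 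The main obstacle throughout is getting every sign right in the integration by parts and correctly identifying which low eigenspace survives, so I would carry out the boundary computation very carefully using $e_n = \mu$, $h_{\alpha n} = 0$, and $d|_{\partial\C_\theta} = \theta/2$, $\nabla d = \mu$ as in the earlier lemmas.
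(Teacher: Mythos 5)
Your general instinct — integrate the kernel equation against a carefully chosen weight and use the Robin boundary condition plus the divergence‑free cofactor to control boundary terms — is in the right family of ideas, but the specific plan you sketch does not close and misidentifies where the hypotheses enter.

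The paper's proof has a two‑step structure that your proposal misses. The first and decisive test function is $h$ itself, not $v$: multiplying $L_h(v)=0$ by $h$ and integrating by parts twice (using $\nabla_j\mathcal G^{ij}=0$ and the vanishing of the boundary terms via $(\nabla^2h+h\sigma)(\mu,e)=0$ on $\partial\C_\theta$ together with the Robin conditions for $h$ and $v$) yields exactly
\[
(p-1)\int_{\C_\theta}v\det(\nabla^2h+h\sigma)\,d\sigma \;=\; n\int_{\C_\theta}v\det(\nabla^2h+h\sigma)\,d\sigma,
\]
so $p\neq n+1$ immediately forces $\int_{\C_\theta}v\det A\,d\sigma=0$. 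The second ingredient, which is absent from your proposal, is the capillary Minkowski/Aleksandrov–Fenchel type inequality from \cite[Corollary 3.3]{MWWX},
\[
\left(\int_{\C_\theta}v\det A\,d\sigma\right)^2\;\geq\;\int_{\C_\theta}h\det A\,d\sigma\cdot\int_{\C_\theta}v\,\mathcal G^{ij}(v_{ij}+v\sigma_{ij})\,d\sigma,
\]
which upgrades the vanishing of $\int v\det A$ to $\int_{\C_\theta}v\,\mathcal G^{ij}(v_{ij}+v\sigma_{ij})\,d\sigma\leq 0$. Only then does one test against $v$: that gives $(p-1)\int h^{-1}v^2\det A\,d\sigma=\int v\,\mathcal G^{ij}(v_{ij}+v\sigma_{ij})\,d\sigma\leq 0$, and since $p>1$, $h>0$, $\det A>0$, this forces $v\equiv0$ directly.

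Your plan of multiplying by $v$ alone and then ``comparing $\int\mathcal G^{ij}v_iv_j$ with $(n+1-p)\int f_th^{p-2}v^2$'' has a genuine sign gap: after integrating by parts, the term $\int(\sum_i\mathcal G^{ii})v^2\,d\sigma$ appears with a favorable sign but is not controlled by $\int f_th^{p-2}v^2$ in any pointwise or a priori way (the trace of the cofactor can be arbitrarily large relative to $\det A$), so the asserted inequality is not justified and the argument stalls exactly where you flag the ``main obstacle.'' You also place the capillary‑evenness assumption in the wrong lemma: the paper's Lemma 4.1 holds for all $v\in\mathcal H\cap\ker L_h$ and all $p>1$ with $p\neq n+1$, with no evenness assumed on $v$; capillary evenness is used only in the $C^0$ estimate for $1<p<n+1$, not to kill a residual coordinate‑function kernel here. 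Your observation that the translational kernel at $p=1$ and the dilational kernel at $p=n+1$ are the ``true'' kernels is a correct heuristic for why $p\neq n+1$ is needed, but the mechanism by which the proof excludes everything else for $p\neq n+1$ is the $h$‑testing identity plus the MWWX inequality, not a maximum principle or a spectral/eigenfunction argument.
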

\begin{proof}
Suppose $v\in {\rm{Ker}}(L_{h})$, then 
\begin{eqnarray}\label{ker-1}
    \mathcal{G}^{ij}(v_{ij}+v\sigma_{ij})-(p-1)h^{-1}\det(\n^{2}h+h\sigma)v=0.
\end{eqnarray}
Multiplying \eqref{ker-1} with $h$ and integrating over $\C_{\theta}$ and applying integration by parts twice,  we get
\begin{eqnarray}\label{ker-2}
    &&(p-1)\int_{\C_{\theta}}v \det(\n^{2}h+h\sigma)d\sigma =\int_{\C_{\theta}}\mathcal{G}^{ij}(v_{ij}+v\sigma_{ij})hd\sigma \notag \\ &=&n\int_{\C_{\theta}}v\det(\n^{2}h+h\sigma)d\sigma+\int_{\partial\C_{\theta}}\mathcal{G}^{ij}\left(hv_{i}\<\mu, e_{j}\>-vh_{i}\<\mu, e_{j}\>\right)ds.
\end{eqnarray}
Along $\partial\C_{\theta}$, \cite[Proposition 2.8]{MWW-AIM} implies that $\mu$ is a principal direction of the principal radii $(\n^2 h+h\s)$, namely $$(\n^2 h+h\s)(\mu, e)=0 ~~\text{ for all } e\in T\p\C_\theta.$$ Together with Robin boundary conditions of $h,v$, we derive
\begin{eqnarray}
    \int_{\partial\C_{\theta}}\mathcal{G}^{ij}\left(hv_{i}\<\mu, e_{j}\>-vh_{i}\<\mu, e_{j}\>\right)ds=\int_{\partial \C_{\theta}}\mathcal{G}^{nn}(h\n_\mu v-v\n_\mu h)ds=0.\label{ker-3}
\end{eqnarray}
Combining with \eqref{ker-2} and \eqref{ker-3}, we obtain
\begin{eqnarray*}
    (p-1)\int_{\C_{\theta}}v\det(\n^{2}h+h\sigma)d\sigma=n\int_{\C_{\theta}}v\det(\n^{2}h+h\sigma)d\sigma.
\end{eqnarray*}
Since $p\neq n+1$, we conclude that 
\begin{eqnarray}\label{ker-5}
    \int_{\C_{\theta}}v\det(\n^{2}h+h\sigma)d\sigma =0.
\end{eqnarray}
As a special form of \cite[Corollary 3.3]{MWWX}, there holds
\begin{eqnarray*}
   \left( \int_{\C_\theta} v \det(\n^2 h+h\s)d\s \right)^2\geq \int_{\C_\theta}  h\det(\n^2 h+h\s)d\s \int_{\C_\theta} v \mathcal G^{ij} (v_{ij}+v\s_{ij}) d\s, 
\end{eqnarray*} which, together with  \eqref{ker-5}, implies
\begin{eqnarray}\label{ker-6}
    \int_{\C_{\theta}}v \mathcal{G}^{ij}(v_{ij}+v\sigma_{ij})d\sigma\leq 0.
\end{eqnarray}
Similarly as above, by multiplying \eqref{ker-1} with $v$ and integrating over $\C_{\theta}$, we get
\begin{eqnarray}\label{ker-7}
    (p-1)\int_{\C_{\theta}}h^{-1}v^{2}\det(\n^{2}h+h\sigma)d\sigma=\int_{\C_{\theta}}v \mathcal{G}^{ij}(v_{ij}+v\sigma_{ij})d\sigma.
\end{eqnarray}
Due to $p>1$ and combining \eqref{ker-6} and \eqref{ker-7}, we derive
\begin{eqnarray*}
    \int_{\C_{\theta}}h^{-1}v^{2}\det(\n^{2}h+h\sigma)d\sigma=0,
\end{eqnarray*}
which implies $v\equiv 0$.
\end{proof}

We are now in a position to prove Theorem \ref{thm-1}.
\begin{proof}[\textbf{Proof of Theorem \ref{thm-1}}]
First, we show the existence parts of (1) and (3). Let  $p>1$ and $p\neq n+1$. Consider Eq. \eqref{eqt}.  For $t=0$, $h=\ell\in \mathcal{H}$ is  a solution, i.e., $0\in \mathcal I$, so the set $\mathcal{I}$ is nonempty.
The closeness of $\mathcal{I}$ follows from Theorem \ref{thm priori est}, while the openness of $\mathcal{I}$ follows from the implicit function theorem, Lemma \ref{ker lem}, and Fredholm's alternative theorem. Then it follows that $1\in \mathcal I$, namely, we obtain a positive solution to Eq. \eqref{eq-monge-ampere-Lp}.  

We move on to prove (2), namely $p=n+1$. For a fixed positive constant $\varepsilon\in (0, 1)$, consider the approximating equation:
\begin{eqnarray}\label{app-equ-1}
		\begin{array}{rcll}
			\det(\n^2 h+h\s )&=&  f h^{n+\varepsilon},& \quad    \hbox{ in } \C_{\theta},\\
			\n_\mu h &=& \cot\theta  h, &\quad  \hbox{ on } \p \C_\theta.
		\end{array}
	\end{eqnarray} 
From the preceding discussion, we know that there exists a positive function $h_{\varepsilon} \in C^{3, \alpha}(\C_{\theta})$ that solves Eq.~\eqref{app-equ-1}. Denote $$\wt{h}_{\varepsilon} \coloneqq \frac{h_{\varepsilon}}{\min\limits_{\C_{\theta}}h_{\varepsilon}},$$ from \eqref{app-equ-1},  then $\widetilde{h}_{\varepsilon}$ satisfies 
\begin{eqnarray*}
		\begin{array}{rcll}
        			\det(\n^2 h+h\s )&=&  (\min\limits_{\C_{\theta}}h_{\varepsilon})^{\varepsilon} f h^{n+\varepsilon},& \quad    \hbox{ in } \C_{\theta},\\
			\n_\mu h &=& \cot\theta  h, &\quad  \hbox{ on } \p \C_\theta.
		\end{array}
\end{eqnarray*}
Theorem \ref{thm priori est}  implies
\begin{eqnarray*}
    \|\widetilde{h}_{\varepsilon}\|_{C^{3,\alpha}}\leq C,\end{eqnarray*}
where the constant $C$ is independent of $\varepsilon$, and it only depends on $n, k, \min\limits_{\C_{\theta}}f$ and $\|f\|_{C^{2}(\C_{\theta})}$. Hence, there exists a subsequence $\varepsilon_{j}\rightarrow 0$, such that $\widetilde{h}_{\varepsilon_{j}}\rightarrow h$ in $C^{2, \alpha_{0}}(\C_{\theta})$ for any $0<\alpha_{0}<\alpha$, and Lemma \ref{C0 pb} implies that $(\min\limits_{\C_{\theta}}h_{\varepsilon_{j}})\rightarrow \gamma$  for some positive constant $\gamma$ as $j\rightarrow+\infty$. Thus we conclude that the pair $(h, \gamma)$ solve
\begin{eqnarray*}
		\begin{array}{rcll}
			\det(\n^2 h+h\s )&=&  \gamma f h^{n},& \quad    \hbox{ in } \C_{\theta},\\
			\n_\mu h &=& \cot\theta  h, &\quad  \hbox{ on } \p \C_\theta.
		\end{array}
\end{eqnarray*}
Therefore, we prove the existence part for the case $p=n+1$ of (2).

The uniqueness of the solution can be established by an argument analogous to those in \cite[Lemma 8]{GL2000}, \cite[Section 4.3]{GX}, and \cite[Section 3.2]{MWW-Quotient} for $p=n+1$, $1<p<n+1$ and $p>n+1$, respectively. We omit the details for brevity. This completes the proof.

\end{proof}

\bigskip

\subsection*{Acknowledgment} X. M. was supported by the National Key R$\&$D Program of China 2020YFA0712800  and the Postdoctoral Fellowship Program of CPSF under Grant Number 2025T180843 and 2025M773082. L. W. was partially supported by CRM De Giorgi of Scuola Normale Superiore and PRIN project 2022E9CF89 of the University of Pisa. L.W. is a member of GNAMPA as part of INdAM. We wish to express sincere appreciation to Dr. Wei Wei for sharing a proof of Lemma 3.5.

\printbibliography

\end{document}